\pgfplotsset{every axis/.append style={
                    axis x line=middle,    
                    axis y line=middle,    
                    axis line style={<->,color=blue}, 
                    xlabel={$x$},          
                    ylabel={$y$},          
            }}
\newtheorem{theorem}{Theorem}
\newtheorem{lemma}[theorem]{Lemma}
\newtheorem{proposition}[theorem]{Proposition}
\newtheorem{corollary}[theorem]{Corollary}
\newtheorem{definition}{Definition}
\newtheorem{question}{Question}
\theoremstyle{remark}
\newtheorem{example}{Example}
\newtheorem{remark}[example]{Remark}
\author{Mike Miller Eismeier}
\address{Department of Mathematics and Statistics, University of Vermont}
\email{Mike.Miller-Eismeier@uvm.edu}
\title{Fourier transforms and integer homology cobordism}
\begin{document}
\maketitle 

\begin{abstract}
We explore the \textit{Fourier transform} of the $d$-invariants, which is particularly well-behaved with respect to connected sum. As corollaries, we show that lens spaces are cancellable in the monoid of 3-manifolds up to integer homology cobordism, and we recover a theorem of Gonz\'alez-Acu\~na--Short on Alexander polynomials of knots with reducible surgeries. 
\end{abstract}

\tableofcontents

\section{Introduction}
The relation of homology cobordism between 3-manifolds has a long and interesting history. Fix a ring $R$. Let $Y$ and $Y'$ be closed oriented 3-manifolds, and suppose $W$ is a compact oriented 4-manifold whose boundary $\partial W$ is oriented diffeomorphic to $Y' - Y$. If both maps \begin{align*}i_*: H_*(Y;R) &\to H_*(W;R) \\ i'_*: H_*(Y';R) &\to H_*(W;R)\end{align*} are isomorphisms, we say that $W$ is an $R$-homology cobordism and that $Y$ and $Y'$ are $R$-homology cobordant. 

This relation is most well-studied when $H_*(Y; R) \cong H_*(S^3; R)$, in which case $Y$ is called an $R$-homology sphere. The set of $R$-homology spheres modulo $R$-homology cobordism form a group $\Theta^3_R$ called the `$R$-homology cobordism group'. The group operation is connected sum, the neutral element is $[S^3]$, and the inverse of $[Y]$ is $[-Y]$. 

If one instead considers the set of all 3-manifolds modulo $R$-homology cobordism, the resulting object is a \emph{monoid}, which we denote by $\widehat \Theta^3_R$; the $R$-homology cobordism group is the sub-monoid of invertible elements.\footnote{If $H_1(Y;R)$ is nonzero, then $Y$ is not invertible in $\widehat \Theta^3_R$; the purported inverse $Y'$ should support an $R$-homology cobordism between $Y \# Y'$ and $S^3$, but $|H_1(Y \# Y';R)| \ge |H_1(Y;R)| > 1 = |H_1(S^3; R)|$.} Some things are known about this monoid; for instance, every equivalence class contains an irreducible 3-manifold \cite{L:hcob-to-irred} or better yet a hyperbolic 3-manifold \cite{M:hcob-to-hyp}. In another direction, there are obstructions to finding a Seifert-fibered manifold in a given equivalence class \cite{CT:no-hcob-to-SF} or more generally to finding a graph manifold whose graph is a tree in a given equivalence class \cite{DH:not-a-tree}.\footnote{Though see \cite{DH-review} for some errata and \cite[Proposition 9.2]{S:new-proof} for a simplified argument.}

In this note, we will investigate integer homology cobordism between a class of 3-manifolds which are not integer homology spheres. In what follows, we suppress the ring $R = \Bbb Z$ from notation, and write integer homology and cohomology groups as $H_*(Y)$ and $H^*(Y)$. 

\begin{theorem}\label{thm:main}
Suppose $L$ and $L'$ are connected sums of lens spaces. If $L$ and $L'$ are integer homology cobordant by a cobordism $W$, then $L$ is oriented diffeomorphic to $L'$, and the induced map $W_*: H_1(L) \to H_1(L')$ respects the natural direct sum decompositions.\footnote{This is meant in an unordered sense. Precisely, suppose $H_1(L) \cong \oplus H_1(L_i)$ and similarly for $L'$. Then $W_*$ sends each $H_1(L_i)$ isomorphically onto some $H_1(L'_j)$.}

Further, if $Y$ is any closed, oriented 3-manifold and $L \# Y$ is integer homology cobordant to $L' \# Y$, then $L$ is oriented diffeomorphic to $L'$.
\end{theorem}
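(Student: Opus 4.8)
The plan is to reduce the cancellation statement to a cancellation property of the Fourier transform of the correction terms. Two inputs are needed. First, the correction term is an integer homology cobordism invariant: if $W$ is an integer homology cobordism from a rational homology sphere $N$ to $N'$, then the induced bijection on $\mathrm{Spin}^c$ structures (affine over the isomorphism $\phi\colon H_1(N)\to H_1(N')$) satisfies $d(N,\mathfrak s)=d(N',\phi(\mathfrak s))$; when $b_1>0$ the same holds for the bottom correction term $\underline d$ on the (finitely many) torsion $\mathrm{Spin}^c$ structures, using totally twisted coefficients. Second, connected sums of lens spaces are L-spaces, and connect-summing with an L-space is a grading shift at chain level, so for $L$ a connected sum of lens spaces and $Y$ any rational homology sphere one has $d(L\#Y,\mathfrak s_L\#\mathfrak s_Y)=d(L,\mathfrak s_L)+d(Y,\mathfrak s_Y)$ (and the analogous identity with $\underline d(Y,\mathfrak s_Y)$ when $b_1(Y)>0$); in particular $d$ is additive across the lens summands of $L$ itself.

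Next I would pass to the Fourier transform. Setting $E_N(g)=t^{\,d(N,\mathfrak s_0+g)}$ for a formal parameter $t$, the additivity above reads $E_{L\#Y}=E_L\boxtimes E_Y$ (external product on $H_1(L)\oplus H_1(Y)$, or on the groups of torsion $\mathrm{Spin}^c$ structures when $b_1(Y)>0$), so the finite Fourier transform factors, $\widehat{E_{L\#Y}}=\widehat{E_L}\boxtimes\widehat{E_Y}$ on $\widehat{H_1(L)}\times\widehat{H_1(Y)}$. This is the good behaviour under connected sum referred to in the title, and it is what places the invariant in the cancellative algebraic setting developed earlier. Homology cobordism invariance then forces $\widehat{E_{L\#Y}}$ and $\widehat{E_{L'\#Y}}$ to agree after reindexing by the dual of $\phi$ and multiplying by a character (the phase coming from a mismatch of basepoints), i.e.
\[
\widehat{E_L}(\chi_L)\,\widehat{E_Y}(\chi_Y)\;=\;\widehat{E_{L'}}\!\bigl(\psi_{L'}(\chi_L,\chi_Y)\bigr)\;\widehat{E_Y}\!\bigl(\psi_{Y}(\chi_L,\chi_Y)\bigr)
\]
for all characters, where $\psi=(\psi_{L'},\psi_Y)\colon\widehat{H_1(L)}\times\widehat{H_1(Y)}\to\widehat{H_1(L')}\times\widehat{H_1(Y)}$ is an isomorphism.

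Now I would cancel $\widehat{E_Y}$. It is a nonzero element, since its value at the trivial character is $\sum_g t^{\,d(Y,\mathfrak s_0+g)}$, a sum of monomials with positive coefficients; the content of the earlier sections is that it may nonetheless be cancelled from the identity above, yielding that $\widehat{E_L}$ and $\widehat{E_{L'}}$ agree up to an isomorphism $\widehat{H_1(L)}\to\widehat{H_1(L')}$ and a phase. Fourier-inverting, $d(L,\cdot)$ and $d(L',\cdot)$ agree up to an isomorphism $H_1(L)\to H_1(L')$ and a translation of the $\mathrm{Spin}^c$ torsor. Since the correction terms of a connected sum of lens spaces, viewed as a function on the $\mathrm{Spin}^c$ torsor, determine it up to orientation-preserving diffeomorphism --- by the classification of lens spaces together with Ozsv\'ath--Szab\'o's recursion for their $d$-invariants, which is also the ingredient behind the first part of Theorem~\ref{thm:main} --- we conclude $L\cong L'$.

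The main obstacle is precisely the step just invoked: the homology cobordism isomorphism $\phi\colon H_1(L)\oplus H_1(Y)\to H_1(L')\oplus H_1(Y)$ need not respect the splitting, so one cannot simply restrict to the trivial character of $H_1(Y)$ and divide. The purpose of the Fourier transform is to convert the problem into factoring a product in a commutative ring, where cancellation is insensitive to coordinate changes like $\psi$; carrying this out --- pinning down the target ring, controlling the characters at which $\widehat{E_Y}$ vanishes (so that it need not be a nonzerodivisor in every component), and showing that the factor $\widehat{E_L}$ is recovered uniquely up to the allowed ambiguities --- is where essentially all the work lies. The remaining bookkeeping (torsors and basepoints so that the phases are honest characters, and the refinement that $W_*$ respects the direct sum decompositions in the first part of the theorem) I expect to be routine by comparison.
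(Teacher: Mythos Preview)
Your proposal diverges from the paper at the decisive point, and the divergence is a genuine gap. You exponentiate the correction term, setting $E_N(g)=t^{d(N,\mathfrak s_0+g)}$, so that additivity of $d$ becomes the external-product identity $\widehat{E_{L\#Y}}=\widehat{E_L}\boxtimes\widehat{E_Y}$. You then want to cancel $\widehat{E_Y}$ from the relation coming from the homology cobordism, acknowledging that the isomorphism $\psi$ need not respect the splitting and that ``this is where essentially all the work lies''. But you do not do that work, and I do not see a route: the box product $\widehat{E_L}\boxtimes\widehat{E_Y}$ is generically nonvanishing on all of $\widehat{H_1(L)}\times\widehat{H_1(Y)}$, so nothing intrinsic to the function singles out the coordinate axes, and unique factorization of such products up to an arbitrary automorphism of the domain group is not a general phenomenon. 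Your appeal to ``the content of the earlier sections'' does not help, because those sections are not about this multiplicative setup.

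The paper takes the Fourier transform of $d$ itself, not of $t^d$, and the whole argument hinges on a feature that exponentiation destroys: for a \emph{sum} $d\oplus d'$ on $A\times A'$ one has $\widehat{d\oplus d'}(\phi,\psi)=0$ whenever both $\phi$ and $\psi$ are nontrivial (Proposition~\ref{prop:fourier-direct-sum}). Thus the nonzero locus of $\widehat{\underline d_{L\#Y}}$ lies in the union of the coordinate axes, and that locus can be read off intrinsically as the set of \emph{maximal special subgroups}: maximal subgroups on which $\widehat d$ is nonzero away from the identity. For lens spaces the Reidemeister-torsion formula $\widehat{d^r_{p,q}}(\zeta)=\big(2p(1-\zeta^{-1})(1-\zeta^{-q})\big)^{-1}$ supplies the nonvanishing, so the maximal special subgroups of $\widehat{\underline d_{L\#Y}}$ are exactly the duals $H_1(L_i)^\vee$ of the lens summands together with whatever maximal special subgroups $Y$ contributes. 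Counting occurrences of each $t$-isomorphism class then gives monoid homomorphisms $c_{p,q}:\widehat\Theta^3_{\Bbb Z}\to\Bbb N$ with $c_{p,q}(L(p,q))=1$, and cancellation of $Y$ happens in $\Bbb N$, where it is trivial. The claim that $W_*$ respects the direct-sum decomposition is the same observation: $W_*^\vee$ must carry maximal special subgroups to maximal special subgroups.
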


The first part of the result, that the oriented diffeomorphism type of $L$ and $L'$ is determined by their integer homology cobordism type, is not new. It follows from the more general results of \cite{G:lattices} on alternating links, and indeed Greene's results imply that double-branched covers of alternating links are determined by their homology cobordism type. Independent proofs of the more restrictive claim that the oriented diffeomorphism type of a lens space is determined by its $d$-invariants have also appeared \cite{Nem:negdef,DW:lens}. Even before then, the integer homology cobordism classification of lens spaces of odd order goes back to \cite{FS:spherical}. 

Our argument is independent of \cite{G:lattices}, depending only on the computation of Reidemeister torsion for lens spaces and its relationship to their $d$-invariants established in \cite{Nem:negdef}. Theorem \ref{thm:main} is also stronger in two ways: first, it constraints the structure of the span $H_1(L) \leftarrow H_1(W) \to H_1(L')$ of any homology cobordism relating $L$ and $L'$; secondly, it establishes that connected sums of lens spaces are \textit{cancellable} in $\widehat{\Theta}^3_{\Bbb Z}$.


The proof of this theorem is presented in Section 4. The key point is that --- provided a certain non-vanishing property holds --- one can recover (a reduced version of) the $d$-invariants of a connected summand from those of a connected sum.\\

To recover the $d$-invariants of the summands, we find it useful to pass to the \textit{Fourier transform}. Given a function $f: A \to \Bbb C$ on a finite abelian group, its Fourier transform is instead a function on the \emph{dual group} $A^\vee = \text{Hom}(A, S^1)$, defined by $\widehat f(\phi) = \frac{1}{|A|} \sum_{a \in A} f(a) \overline{\phi(a)}$. Here $A$ will be $H^2(Y) \cong H_1(Y)$, and we will pick a base spin$^c$ structure to consider the $d$-invariants as a function on $H_1(Y)$. 

The use of Fourier transforms is well-established in the theory of Reidemeister torsion: analytic interpretations of the Reidemeister torsion (eg \cite{AnTors1, AnTors2}) interpret the torsion as a function of oriented flat bundles, and hence take as input a representation $\phi: H_1(Y;\Bbb Z) \to S^1$. It is also profitable to rephrase the surgery relation in terms of Fourier transforms; Nicolaescu uses this to compare Reidemeister torsion to a Seiberg--Witten invariant in \cite{Nic:SW-QHS}. See also the discussion peppered throughout \cite{Nic:R-book}.

Here we use a simple property of Fourier transforms. Given two groups $A, A'$ and functions $f: A \to \Bbb C$ and $f': A' \to \Bbb C$, the \emph{direct sum} $(f \oplus f')(a,a') = f(a) + f(a')$ on $A \times A'$ has an especially simple Fourier transform; one can effectively read off the values of $\widehat f(\phi)$ and $\widehat f'(\psi)$ from the knowledge of $\widehat{f \oplus f'}$ whenever $\phi$ or $\psi$ are non-trivial homomorphisms. See Proposition \ref{prop:fourier-direct-sum} for a precise statement. 

Applying this purely algebraic observation to $d$-invariants, one can recover (a reduced version of) the $d$-invariants of summands from those of a connected sum. Provided they satisfy a certain non-vanishing property, this recovery process is well-defined up to automorphism of $H^2(Y) \times H^2(Y')$, and thus preserved by integer homology cobordisms. Because the $d$-invariants of lens spaces satisfy this nonvanishing property, and these reduced $d$-invariants --- equivalent to Reidemeister torsion for $L$-spaces --- classify lens spaces up to oriented diffeomorphism, the main theorem follows.\\

In fact, the proof of Theorem \ref{thm:main} exhibits a stronger claim: there exist monoid homomorphisms $c_{p,q}: \widehat{\Theta}^3_{\Bbb Z} \to \Bbb N$ with $c_{p,q}(L(p,q)) = 1$ and $c_{p,q}(L(r,s)) = 0$ unless $L(r,s)$ is oriented diffeomorphic to $L(p,q)$. The purely algebraic part of this claim is the content of Corollary \ref{cor:lin-indep} in Section 2, while the relevant computation for lens spaces is given in Proposition \ref{prop:lens-calc} in Section 4.

If one considers the \emph{Grothendieck group} $\text{Gr}\big(\widehat \Theta^3_{\Bbb Z}\big)$, the group whose elements are pairs $([Y], [Z])$ with $([Y], [Z]) = ([Y'], [Z'])$ if there is an integer homology cobordism $Y \# Z' \sim Y' \# Z$, the existence of these homomorphisms $c_{p,q}$ shows that lens spaces (up to oriented diffeomorphism) span a $\Bbb Z^\infty$ summand of the Grothendieck group.\\

As an aside, the perspective of Fourier transforms appears useful whenever one has an invariant which is additive in the sense above, including both $d$-invariants and Reidemeister torsion. To demonstrate this, in Section 3 we reprove \cite[Theorem 2.2]{its-a-GAS}: if $K$ is a knot with reducible surgery $S_{n}(K) \cong Y \# Y'$ with $H_1(Y) = \Bbb Z/p$ and $H_1(Y') = \Bbb Z/q$, the Alexander polynomial of $K$ is divisible by that of the $(p,q)$ torus knot: $\Delta(T_{p,q}) \mid \Delta(K)$. We hope that Fourier transforms can be a useful organizing tool in other contexts as well.\\

\noindent\textbf{Acknowledgements.} The author would like to thank Danny Ruberman for a useful discussion during the preparation of this note, as well as Tye Lidman for comments on an early draft and suggesting that one might reprove \cite[Theorem 2.2]{its-a-GAS} using the Fourier transform technique. The basic algebraic observation used here came about during conversations with Aiden Sagerman on a related question about products of punctured lens spaces. 

\section{Weighted torsors and Fourier transforms}
In this section we cover the purely algebraic aspects of the main result: the context of weighted torsors, the definition of the Fourier transform, and the process of recovering summands of the direct sum of weighted torsors using Fourier transforms.

\subsection{Weighted torsors}
Here we rapidly define the relevant algebraic objects. Though the initial discussion about torsors is valid for any group, the relevant groups for us will be abelian, so we use additive notation.

\begin{definition}
A \textbf{torsor} is a pair $(A, S)$, where $A$ is a group and $S$ carries a free and transitive (right) action by $A$. 
\end{definition}

A choice of element $s \in S$ gives rise to a bijection $m_s: A \cong S$, given by sending $a \mapsto s + a$; for a different choice of element $s' \in S$ with $s' = s + a_0$, the bijections differ by $(m_{s}^{-1} m_{s'})(a) = a_0 + a.$ 

This observation shows that one may think of a torsor as a group where one has forgotten which element is the identity, or as $A$ `modulo translation'.

\begin{definition}An \textbf{isomorphism of torsors} is a pair $(f, g): (A,S) \to (A', S')$, where $f: A \to A'$ is a group isomorphism and $g: S \to S'$ is a function satisfying $$g(s + a) = g(s) + f(a).$$ 
\end{definition}

By transitivity of the group actions, $g$ is necessarily a bijection. If one chooses basepoints $s \in S$ and $s' \in S'$, and we have $g(s) = s' + a'$, then the map $m_{s'}^{-1} g m_{s}: A \to A$ is given by $$(m_{s'}^{-1} g m_{s})(a) = m_{s'}^{-1} g(s+a) = m_{s'}^{-1}\big(g(s) + f(a)\big) = m_{s'}^{-1}\big(s' + a' + f(a)\big) = a' + f(a).$$ Thinking of $S$ as a group where we've forgotten the identity element (or as a sort of affine space), one should imagine $g$ to be an affine function whose `linear part' is the homomorphism $f$; indeed, one can recover $f$ from $g$. 

\begin{remark}The group of automorphisms of $(A, S)$ is a group sometimes called the \textit{holomorph} of $A$, and can be understood as the group of affine automorphisms of $A$.
\end{remark}

\begin{definition}
A \textbf{weighted torsor} is a torsor $(A, S)$ equipped with a function $d: S \to \Bbb C$. If $S = A$, we call $(A, d)$ a \textbf{weighted group}.
\end{definition}

If one chooses a basepoint $s \in S$, we write $d_s: A \to \Bbb C$ for the function $d_s(a) = (dm_s)(a) = d(s+a)$. For a different choice of basepoint $s' = s+a_0$, we have $d_{s'}(a) = d_s(a+a_0)$. 

\begin{definition}
An \textbf{isomorphism of weighted torsors} $(A,S,d) \to (A',S',d')$ is a torsor isomorphism $(f, g): (A,S) \to (A', S')$ which has $d'(g(s)) = d(s).$
\end{definition}

If one prefers to think entirely in terms of the group $A$ (having chosen an arbitrary basepoint), a weighted torsor is a function $d: A \to \Bbb C$, with $d$ considered equivalent to $d_b(a) = d(a+b)$ for any $b \in A$. In this perspective, an isomorphism of weighted torsors $(A,d) \cong (A',d')$ amounts to an \textit{affine} isomorphism $f+a': A \to A'$ which has $$d'(f(a)+a') = d(a).$$

\subsection{Fourier transforms and weighted duals} Given an abelian group $A$, its \textit{Pontryagin dual} is the group $A^\vee = \text{Hom}(A, S^1)$.\\

\textbf{Convention.} For the rest of this note, abelian groups $A$ and torsors $(A,S)$ are assumed to be \textbf{finite}. This is true in all cases of interest to us, and simplifies discussions of Fourier transforms.\\

Given a function $d: A \to \Bbb C$, we can take its \textit{Fourier transform} $\widehat d: A^\vee \to \Bbb C$, defined as $$\widehat d(\phi) = \frac{1}{|A|} \sum_{a \in A} d(a) \overline{\phi(a)}.$$ 

\begin{remark}\label{rmk:conventions}
This differs from Nicolaescu's definition in \cite[Section 1.6]{Nic:R-book} by a scalar factor of $\frac{1}{|A|}$. Our definition makes some important formulas later slightly simpler.    
\end{remark}

If $d': A \to \Bbb C$ is defined by $d'(a) = d(a+a')$, we have $$\widehat{d'}(\phi) = \frac{1}{|A|} \sum_{a \in A} d'(a) \overline{\phi(a)} = \frac{1}{|A|}\sum_{a \in A} d'(a-a') \overline{\phi(a-a')} = \frac{1}{|A|}\sum_{a \in A} d(a) \overline{\phi(a)} \phi(a') = \widehat d(\phi) \phi(a').$$

This computation inspires the following definition, which we phrase intrinsically on the dual $B = A^\vee$; the statement below implicitly uses the isomorphism $(A^\vee)^\vee \cong A$ for finite $A$. The terminology follows \cite[Definition 3.22]{Nic:R-book} (though notice that Nicolaescu allows for a sign ambiguity, and we do not).

\begin{definition}
If $B$ is an abelian group equipped with weights $\widehat d$ and $\widehat d'$, and there exists some $\psi \in B^\vee$ so that $\widehat d'(b) = \widehat d(b) \psi(b)$ for all $b \in B$, we say that $d$ and $d'$ are \textbf{$t$-equivalent}. We say that weighted groups $(B,\widehat d)$ and $(B', \widehat d')$ are \textbf{$t$-isomorphic} if there exists an isomorphism $f: B \to B'$ and element $\phi \in B^\vee$ so that $$\widehat d'(f(b)) = \widehat d(b) \phi(b)$$ for all $b \in B$. 
\end{definition}

The discussion above shows that given a weighted torsor $(A,S,d)$, choosing a basepoint $s \in S$ and taking the Fourier transform of $d_s$ gives us a weighted group $(A^\vee, \widehat d_s)$, well-defined up to $t$-equivalence. Furthermore, it is clear that isomorphic weighted torsors give rise to $t$-isomorphic weighted groups.\\

Notice that $\widehat d(1) = \sum_{a \in A} d(a)$, and if $\widehat d$ is $t$-isomorphic to $\widehat d'$, then $\widehat d(1) = \widehat d'(1)$. Later, this term will cause us some minor irritation, so we do what we can to remove it.

\begin{definition}\label{def:red}A weighted torsor $(A,S,d)$ is \textbf{reduced} if $\sum_{s \in S} d(s) = 0$; equivalently, $\widehat d(1) = 0$. Given any weighted torsor $(A,S,d)$, its \textbf{reduced part} is given by $(A,S,d^r)$, where $$d^r(s) = d(s) - \frac{1}{|A|} \sum_{s' \in S} d(s');$$ that is, we subtract off the average value. 
\end{definition}

It is clear that $d^r$ is reduced, and that reduction doesn't change the value of the Fourier transform at any $\phi \ne 1$. To see this, we need a small useful lemma.

\begin{lemma}\label{lemma:sum-over-phi}
If $A$ is a finite abelian group and $\phi: A \to S^1$ is a homomorphism, we have $$\sum_{a \in A} \phi(a) = \begin{cases} 0 & \phi \ne 1 \\
|A| & \phi = 1 \end{cases}$$
\end{lemma}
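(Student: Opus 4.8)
The plan is to prove the character-sum identity $\sum_{a \in A} \phi(a)$ equals $|A|$ when $\phi = 1$ and $0$ otherwise. The case $\phi = 1$ is immediate: every summand is $1$, so the sum is $|A|$. The content is in the case $\phi \neq 1$, and the standard trick is a ``shift'' argument: since $\phi$ is nontrivial, there exists some $a_0 \in A$ with $\phi(a_0) \neq 1$.

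First I would set $S = \sum_{a \in A} \phi(a)$. Then I would exploit the fact that the map $a \mapsto a + a_0$ is a bijection of $A$ onto itself (here using additive notation, as the excerpt does), so reindexing gives $S = \sum_{a \in A} \phi(a + a_0) = \sum_{a \in A} \phi(a)\phi(a_0) = \phi(a_0) S$, using that $\phi$ is a homomorphism. Hence $(1 - \phi(a_0)) S = 0$, and since $\phi(a_0) \neq 1$ the factor $1 - \phi(a_0)$ is a nonzero complex number, forcing $S = 0$.

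There is no real obstacle here — the only thing to be careful about is making the reindexing step explicit (the translation $a \mapsto a + a_0$ permutes $A$) and noting that we are working in $\Bbb C$, where a nonzero scalar is cancellable. One could alternatively phrase this representation-theoretically (the sum is $|A|$ times the inner product of $\phi$ with the trivial character, which vanishes by orthogonality), but the direct shift argument is shortest and self-contained. I would write it in two or three lines.

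\begin{proof}
If $\phi = 1$ then every term equals $1$, so the sum equals $|A|$. Suppose $\phi \neq 1$, and set $S = \sum_{a \in A} \phi(a)$. Choose $a_0 \in A$ with $\phi(a_0) \neq 1$. Since translation by $a_0$ is a bijection of $A$, reindexing the sum and using that $\phi$ is a homomorphism gives
$$S = \sum_{a \in A} \phi(a + a_0) = \phi(a_0) \sum_{a \in A} \phi(a) = \phi(a_0) S.$$
Thus $\big(1 - \phi(a_0)\big) S = 0$ in $\Bbb C$, and since $\phi(a_0) \neq 1$ we conclude $S = 0$.
\end{proof}
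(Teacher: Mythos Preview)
Your proof is correct. It is the classical ``translation'' argument: pick $a_0$ with $\phi(a_0)\ne 1$, reindex, and cancel.

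The paper takes a slightly different route. Rather than shifting inside $A$, it passes to the image: writing $\zeta$ for a generator of $\phi(A)\subset S^1$, a primitive $m$th root of unity with $m>1$, the sum becomes $\frac{|A|}{m}\sum_{k=0}^{m-1}\zeta^k$, and the geometric series identity $\sum_{k=0}^{m-1}\zeta^k=\frac{1-\zeta^m}{1-\zeta}=0$ finishes it. Both arguments are one-liners and equally standard; yours has the minor advantage of never needing to describe the image or invoke the geometric series, while the paper's version makes the connection to roots of unity explicit (which resonates with the Reidemeister torsion computations later in the paper). Either is perfectly acceptable here.
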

\begin{proof}
This is a special case of the orthogonality relations for irreducible characters \cite[Theorem 2.3.3]{serre:rep}. The proof is included for completeness. 

If $\phi$ is trivial this is obvious. For $\phi$ nontrivial, write $\zeta$ for a generator of $\phi(A)$ so that $\zeta$ is a primitive $m$th root of unity for some $m > 1$. We have $$\sum_{a \in A} \phi(a) = \frac{|A|}{m} \sum_{k=0}^{m-1} \zeta^k.$$ But $\sum_{k=0}^{m-1} \zeta^k = \frac{1-\zeta^m}{1-\zeta} = 0$ for $\zeta \ne 1$ a non-trivial $m$th root of unity.
\end{proof}

It follows that if two weighted torsors have $d'(a) = d(a) + c$ for all $a \in A$ and some constant $c$, we have $$\widehat{d'}(\phi) = \frac{1}{|A|}\sum_{a \in A} d(a) \overline{\phi(a)} + \frac{1}{|A|}\sum_{a \in A} c \overline{\phi(a)} = \begin{cases} \widehat d(1) + c & \phi = 1 \\ \widehat d(\phi) & \phi \ne 1 \end{cases}$$ 

\begin{corollary}
If $(A,S,d)$ is a weighted torsor, its reduced part $(A, S, d^r)$ satisfies $$\widehat{d^r}(\phi) = \begin{cases} 0 & \phi = 1 \\ \widehat d(\phi) & \phi \ne 1 \end{cases}$$
\end{corollary}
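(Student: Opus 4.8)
The plan is to read the statement off directly from the displayed computation immediately preceding the corollary. First I would address a bookkeeping point: the Fourier transform has only been defined for weighted \emph{groups}, so $\widehat{d^r}$ for a weighted torsor tacitly means $\widehat{(d^r)_s}$ for some choice of basepoint $s \in S$. The reduction operation is compatible with this choice, since $(d^r)_s(a) = d^r(s+a) = d(s+a) - \frac{1}{|A|}\sum_{s' \in S} d(s') = d_s(a) - \frac{1}{|A|}\sum_{a' \in A} d_s(a') = (d_s)^r(a)$; here we used that $a' \mapsto s + a'$ is a bijection $A \to S$. Hence it suffices to prove the statement for a weighted group $(A,d)$, with $d^r(a) = d(a) - \frac{1}{|A|}\sum_{a' \in A} d(a')$.

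Next I would set $c = -\frac{1}{|A|}\sum_{a' \in A} d(a')$, so that $d^r(a) = d(a) + c$ for all $a \in A$, and observe that $c = -\widehat d(1)$, since the trivial character contributes the factor $1$ in the sum defining $\widehat d(1) = \frac{1}{|A|}\sum_{a \in A} d(a)$. Now I apply the displayed identity established just above the corollary (the one derived from Lemma \ref{lemma:sum-over-phi}), which for $d' (a) = d(a) + c$ gives $\widehat{d'}(\phi) = \widehat d(\phi)$ for $\phi \ne 1$ and $\widehat{d'}(1) = \widehat d(1) + c$. Taking $d' = d^r$ yields $\widehat{d^r}(\phi) = \widehat d(\phi)$ for $\phi \ne 1$, while $\widehat{d^r}(1) = \widehat d(1) + c = \widehat d(1) - \widehat d(1) = 0$, which is exactly the claim.

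I do not expect any real obstacle; this is a one-line consequence of the preceding computation. The only step that deserves a sentence of care is the compatibility of reduction with the choice of basepoint, which is what makes $\widehat{d^r}$ meaningful for a torsor and lets us reduce to the weighted-group case.
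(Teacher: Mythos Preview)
Your proposal is correct and follows exactly the approach the paper intends: the corollary is stated without proof, as an immediate consequence of the displayed computation (derived from Lemma~\ref{lemma:sum-over-phi}) with $c = -\widehat d(1)$. Your added remark that reduction commutes with choice of basepoint is a reasonable bit of extra care, but the paper does not spell this out.
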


\subsection{Direct sums of weighted torsors}
Given two weighted torsors $(A, S, d)$ and $(A', S', d')$ we say their \textbf{direct sum} is the weighted torsor $(A \times A', S \times S', d \oplus d')$, where $$(d \oplus d')(s,s') = d(s) + d(s').$$ 

Notice that the $(A \times A')^\vee$ is naturally isomorphic to $A^\vee \times (A')^\vee$; if $\phi: A \to S^1$ and $\psi: A' \to S^1$ are homomorphisms, these give rise to the homomorphism $\phi \psi: A \times A' \to S^1$ by pointwise multiplication: $(\phi \psi)(a,a') = \phi(a) \psi(a')$.\\

The observation which motivated the present note is the following calculation of the Fourier transform of a direct sum of weighted torsors. 

\begin{proposition}\label{prop:fourier-direct-sum}
If $d \oplus d': A \times A' \to \Bbb Q$ is the direct sum of two weighted groups, the Fourier transform satisfies $$\widehat{(d \oplus d')}(\phi \psi) = \begin{cases} \widehat d(\phi) & \phi \ne 1 \text{ and } \psi = 1 \\ \widehat d'(\psi) & \phi = 1 \text{ and } \psi \ne 1 \\ \widehat d(1) + \widehat d'(1) & \phi = \psi = 1 \\ 0 & \text{else} \end{cases}$$
\end{proposition}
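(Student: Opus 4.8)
The plan is to compute $\widehat{(d \oplus d')}(\phi\psi)$ directly from the definition of the Fourier transform, exploiting the product structure of the group $A \times A'$ to factor the sum. Writing out the definition, we have
\[
\widehat{(d \oplus d')}(\phi\psi) = \frac{1}{|A \times A'|} \sum_{(a,a') \in A \times A'} \big(d(a) + d'(a')\big)\,\overline{\phi(a)\psi(a')},
\]
and since $|A \times A'| = |A|\,|A'|$, the sum splits as a sum of two double sums, each of which factors as a product of a sum over $A$ and a sum over $A'$.

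The key step is then to apply Lemma \ref{lemma:sum-over-phi} to the ``inert'' variable in each term. In the first double sum, the summand $d(a)\overline{\phi(a)}$ does not depend on $a'$, so that double sum equals $\big(\tfrac{1}{|A|}\sum_{a} d(a)\overline{\phi(a)}\big)\cdot\big(\tfrac{1}{|A'|}\sum_{a'} \overline{\psi(a')}\big) = \widehat d(\phi)\cdot\big(\tfrac{1}{|A'|}\sum_{a'}\overline{\psi(a')}\big)$. By Lemma \ref{lemma:sum-over-phi} (applied to $\overline\psi$, which is trivial iff $\psi$ is), the second factor is $1$ if $\psi = 1$ and $0$ otherwise. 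Symmetrically, the second double sum contributes $\widehat{d'}(\psi)$ if $\phi = 1$ and $0$ otherwise. Adding the two contributions and reading off the four cases $(\phi \ne 1, \psi = 1)$, $(\phi = 1, \psi \ne 1)$, $(\phi = \psi = 1)$, and ($\phi \ne 1$ and $\psi \ne 1$) yields exactly the claimed formula; note that in the case $\phi = \psi = 1$ both contributions survive and sum to $\widehat d(1) + \widehat{d'}(1)$, while in the last case both vanish.

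I do not expect a genuine obstacle here: the proof is a one-line factorization followed by an invocation of Lemma \ref{lemma:sum-over-phi}. The only points requiring a modicum of care are bookkeeping ones — correctly factoring the normalizing constant $\tfrac{1}{|A||A'|}$ into $\tfrac{1}{|A|}\cdot\tfrac{1}{|A'|}$ and matching each piece to the right sub-sum, and observing that $\overline{\psi}$ is trivial precisely when $\psi$ is so that Lemma \ref{lemma:sum-over-phi} applies verbatim. One could alternatively phrase the argument more conceptually by noting that $d \oplus d' = (d \boxtimes 1) + (1 \boxtimes d')$ and that the Fourier transform of an external ``box'' product of functions is the box product of Fourier transforms, but the direct computation is shorter and entirely self-contained.
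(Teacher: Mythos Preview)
Your proof is correct and follows essentially the same route as the paper: expand the definition, split the sum into two double sums, factor each as a product over $A$ and $A'$, and apply Lemma \ref{lemma:sum-over-phi} to the inert factor. The paper's write-up is nearly identical, down to the case analysis and the remark that the $\phi=\psi=1$ case yields $\widehat d(1)+\widehat{d'}(1)$.
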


\begin{proof}
We have \begin{align*}\widehat{(d \oplus d')}(\phi\psi) &= \frac{1}{|A||A'|}\sum_{a,a'} (d \oplus d')(a, a') \overline{\phi\psi(a,a')} = \frac{1}{|A||A'|}\sum_{a,a'} \big(d(a) + d'(a')\big) \overline{\phi(a)} \overline{\psi(a')} \\
&= \frac{1}{|A||A'|}\left(\sum_{a,a'} d(a) \overline{\phi(a)} \overline{\psi(a')}\right) + \frac{1}{|A||A'|}\left(\sum_{a,a'} d'(a') \overline{\psi(a')} \overline{\phi(a)}\right) \\
&= \left(\frac{1}{|A|}\sum_a d(a) \overline{\phi(a)}\right) \left(\frac{1}{|A'|}\sum_{a'} \overline{\psi}(a')\right) + \left(\frac{1}{|A'|}\sum_{a'} d'(a') \overline{\psi(a')}\right) \left(\frac{1}{|A|}\sum_a \overline{\phi}(a)\right).
\end{align*}

By Lemma \ref{lemma:sum-over-phi}, the first term vanishes when $\psi \ne 1$ and is $|A'|\widehat d(\phi)$ when $\psi = 1$, while the second term vanishes when $\phi \ne 1$ and is $\widehat d'(\psi)$ when $\phi = 1$. This gives the stated claim in all cases except $\phi, \psi = 1$; in that case, it gives $\widehat d(1) + \widehat d'(1)$.
\end{proof}

In particular, for \textbf{nontrivial} $\phi$ and $\psi$, one can read off the values of $\widehat d(\phi)$ and $\widehat d'(\psi)$ from the Fourier transform of the direct sum $\widehat{d \oplus d'}$. In the non-reduced case, the fact that $\widehat d(1)$ and $\widehat d'(1)$ are combined in $\widehat{(d \oplus d')}(1)$ means that we cannot recover this information from the Fourier transform of the direct sum. This is why we restrict attention to reduced weighted torsors.

\subsection{Nonvanishing properties and recovering invariants of summands}
We will now be more precise about the process of recovering the summands of a direct sum of weighted torsors in a way which is well-defined up to isomorphism. To do so, we must make some further assumptions. 

\begin{definition}A weighted group $(B, \widehat d)$ has the \textbf{nonvanishing property} if $\widehat d(b) \ne 0$ for all \textbf{nontrivial} elements $b \in B$. 
\end{definition}

Notice that this property is well-defined up to $t$-isomorphism, because if $(B,\widehat d)$ and $(B',\widehat d')$ are $t$-isomorphic, we have an isomorphism $f: B \to B'$, an element $\phi \in B^\vee$, and an equality $\widehat d'(f(b)) = \widehat d(b) \phi(b)$. Because $b$ is non-trivial if and only if $f(b)$ is, and $\phi(b) \in S^1$ is nonzero, $\widehat d'$ has the nonvanishing property if and only if $\widehat d$ does.

\begin{definition}
If $(B, \widehat d)$ is a weighted group, a \textbf{special subgroup} is a \textbf{non-trivial} subgroup $C \subset B$ so that $\widehat d(c) \ne 0$ for all non-trivial $c \in C$. A \textbf{maximal special subgroup} is a special subgroup which is maximal among special subgroups.
\end{definition}

Notice that special subgroups are well-defined up to $t$-equivalence of weights, and that $t$-isomorphism preserves maximal special subgroups: if $f: (B, \widehat d) \to (B', \widehat d')$ is a $t$-isomorphism and $C \subset B$ is a special subgroup, then $f(C)$ is too, and vice versa. Further, a $t$-isomorphism maps $(C, \widehat d|_C)$ $t$-isomorphically onto $(f(C), \widehat d'|_{f(C)})$. In particular, the maximal special subgroups (considered as weighted groups up to $t$-isomorphism) are $t$-isomorphism invariants of $(B, \widehat d)$.

It immediately follows from this that isomorphisms between direct sums of weighted torsors with the nonvanishing property are rather constrained.

\begin{corollary}\label{cor:decomp}
Suppose $\{(A_i, d_i)\}_{i=1}^n $ is a collection of weighted groups whose Fourier transforms satisfy the nonvanishing property, and similarly with $\{(A'_j, d'_j)\}_{j=1}^m$. Write $(A, d) = \bigoplus_{i=1}^n (A_i, d_i)$ and similarly for $(A', d')$. If there is an affine isomorphism of weighted groups $\varphi+a': (A, d) \cong (A', d')$, then $n = m$ and the map $\varphi$ preserves the direct sum decompositions, in the sense that for all $i$ we have $\varphi(A_i) = A'_j$ for some $j$.
\end{corollary}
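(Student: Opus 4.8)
The plan is to deduce Corollary~\ref{cor:decomp} from the discussion of maximal special subgroups, by showing that for a direct sum $(A,d) = \bigoplus_{i=1}^n (A_i, d_i)$ of weighted groups whose Fourier transforms satisfy the nonvanishing property, the factors $A_i$ are precisely recovered (up to reordering) as the maximal special subgroups of $(A^\vee, \widehat d)$. First I would pass from the affine isomorphism $\varphi + a'$ to the induced $t$-isomorphism on Fourier transforms: by the computation preceding Remark~\ref{rmk:conventions}, translating $d$ by $a'$ multiplies $\widehat d$ by a character, and composing with the linear part $\varphi$ transports $\widehat d$ to $\widehat{d'}$ up to that character. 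So it suffices to prove the statement about $t$-isomorphisms of the dual weighted groups, and since maximal special subgroups are $t$-isomorphism invariants (as noted in the text), I only need to identify them concretely on each side.

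Next I would compute the maximal special subgroups of $(A^\vee, \widehat d)$ where $(A,d) = \bigoplus_i (A_i,d_i)$. Under the canonical identification $A^\vee \cong \prod_i A_i^\vee$, Proposition~\ref{prop:fourier-direct-sum} (applied inductively) tells us that $\widehat d(\phi_1 \cdots \phi_n) = \widehat{d_k}(\phi_k)$ when exactly one $\phi_k$ is nontrivial, and $\widehat d = 0$ at any element with two or more nontrivial coordinates. Hence each factor $A_k^\vee \subset A^\vee$ (embedded as tuples trivial in all coordinates but the $k$-th) is a special subgroup, by the nonvanishing property of $\widehat{d_k}$; and it is maximal, since any subgroup strictly containing it must contain an element with a nontrivial coordinate in some $j \ne k$ as well as the nontrivial $k$-th coordinate, on which $\widehat d$ vanishes. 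Conversely, any special subgroup $C$ must avoid the vanishing locus, i.e.\ every nontrivial element of $C$ has exactly one nontrivial coordinate; a short argument (closure under the group operation forces all nontrivial elements of $C$ to share the same active coordinate) shows $C \subseteq A_k^\vee$ for a single $k$. Therefore the maximal special subgroups of $(A^\vee,\widehat d)$ are exactly $A_1^\vee, \dots, A_n^\vee$, and similarly those of $(A'^\vee, \widehat{d'})$ are $A_1'^\vee, \dots, A_m'^\vee$.

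Finally, the $t$-isomorphism $\widehat\varphi \colon (A'^\vee, \widehat{d'}) \to (A^\vee, \widehat d)$ induced by $\varphi$ must carry the set of maximal special subgroups bijectively onto the set of maximal special subgroups, so $n = m$ and $\widehat\varphi(A_j'^\vee) = A_{\sigma(j)}^\vee$ for some permutation $\sigma$. Dualizing back (using that $\widehat\varphi$ is the dual map of $\varphi$, and that the annihilator of $A_k^\vee$ in $A$ is $\prod_{i \ne k} A_i$) translates this into the statement $\varphi(A_j) = A_{\sigma(j)}'$, which is what we want. The main obstacle is the converse direction in the middle step: arguing cleanly that a special subgroup cannot ``mix'' two different coordinate directions. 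The key point is that if $c, c' \in C$ are nontrivial with active coordinates $k \ne \ell$, then $c + c'$ has nontrivial coordinates in both $k$ and $\ell$, so $\widehat d(c+c') = 0$, contradicting that $C$ is special — this forces a common active coordinate, hence $C \subseteq A_k^\vee$. With that in hand, the rest is bookkeeping with the identification $A^\vee \cong \prod A_i^\vee$ and Proposition~\ref{prop:fourier-direct-sum}.
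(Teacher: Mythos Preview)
Your proposal is correct and follows essentially the same route as the paper: identify the maximal special subgroups of $(A^\vee,\widehat d)$ as the coordinate axes via Proposition~\ref{prop:fourier-direct-sum}, observe that the $t$-isomorphism induced by $\varphi$ permutes them, and then undo the duality. You are in fact more explicit than the paper in two places: you supply the converse argument that any special subgroup lies inside a single axis (the paper simply asserts this), and you carry out the final dualization via annihilators, whereas the paper does an equivalent direct character computation to show $\varphi(A_i)\subset A'_{\sigma(i)}$.
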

\begin{proof}
By Proposition \ref{prop:fourier-direct-sum}, the maximal special subgroups of $A^\vee = A_1^\vee \oplus \cdots \oplus A_n^\vee$ are precisely the coordinate axes $A_i^\vee$ (where all coordinates but the $i$th are nonzero). Comparing the number of maximal special subgroups, we see that $m = n$. Because $\varphi^\vee$ maps maximal special subgroups bijectively to maximal special subgroups, for some permutation $\sigma$ we have $\varphi^\vee((A'_{\sigma(i)})^\vee) = A_i^\vee$ for all $i$. That is, if $\psi': A' \to S^1$ is any homomorphism, then $\psi'$ factors through $\pi'_{\sigma(i)}: A' \to A'_{\sigma(i)}$ if and only if there exists some $\psi: A_i \to S^1$ with $$\psi \pi_i = \psi' \pi'_{\sigma(i)} \varphi.$$

It will follow that $\varphi(A_i) \subset A'_{\sigma(i)}$, and then equality follows because these have the same cardinality (their duals do) and $\varphi$ is injective. To see this first claim, pick $x_i \in A_i$, and consider $y_j = \pi_j \varphi(x_i)$. If $y_j$ is nonzero, there is some homomorphism $\psi'_j: A'_j \to S^1$ with $\psi'_j(y_j) \ne 0$. By the discussion above, $$\psi'_j(y_j) = \psi'_j \pi'_j \varphi(x_i) = \psi \pi_{\sigma^{-1}(j)}(x_i).$$ This can only be nonzero if $\sigma^{-1}(j) = i$ by assumption, so that $\varphi(x_i)$ indeed lies in $A'_{\sigma(i)}$.
\end{proof}

We will prove the main theorem similarly, by counting maximal special subgroups (considered as weighted groups up to $t$-isomorphism); we introduce notation for this special concept.

\begin{definition}\label{def:MS}
Given a weighted group $(B, \widehat d)$, we associate the multiset $$MS(B, \widehat d) = \left\{\left[C, \widehat d|_C\right] \; \bigg| \; C \subset B \textup{ is a maximal special subgroup}\right\}$$ of special subgroups of $B$ equipped with the restriction of $\widehat d$, considered up to $t$-isomorphism.
\end{definition}

Recall here that a multiset $M$ is a set (by an abuse of notation written with the same name $M$) where each element $x \in M$ is equipped with a weight $w_M(x) \ge 1$ labeling how many times it occurs in the multiset. 

Notice that if $(B, \widehat d)$ is $t$-isomorphic to $(B', \widehat d')$, then the multisets $MS(B, \widehat d)$ and $MS(B', \widehat d')$ are isomorphic (there is a weight-preserving bijection between them). If $(A, S, d)$ is a weighted torsor, the multiset $MS(A^\vee, \widehat d_s)$ is an invariant of $(A, S, d)$; isomorphic weighted torsors give rise to isomorphic multisets. We write $MS(A, S, d)$ for $MS(A^\vee, \widehat d_s)$ for some choice of $s \in S$.\\

If $M, N$ are multisets, we write $M \cup N$ for the multiset whose underlying set is the union of the underlying sets of $M$ and $N$, and whose weight is $w_{M \cup N}(x) = w_M(x) + w_N(x)$. (Here we write $w_N(x) = 0$ if $x$ does not lie in $N$, and similarly with $M$.)

The crucial observation, almost immediate from Proposition \ref{prop:fourier-direct-sum}, is that this multiset is additive. 

\begin{proposition}\label{prop:additivty}
If $(A, S, d)$ and $(A', S', d')$ are \textbf{reduced} weighted torsors, we have $$MS(A \times A', S \times S', d \oplus d') = MS(A, S, d) \cup MS(A', S', d').$$
\end{proposition}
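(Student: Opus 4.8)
The plan is to show that the maximal special subgroups of the dual $(A \times A')^\vee \cong A^\vee \times (A')^\vee$, equipped with the Fourier transform $\widehat{d \oplus d'}$, are exactly the maximal special subgroups of $A^\vee$ together with those of $(A')^\vee$ (regarded as subgroups of the product sitting on the two coordinate axes), and moreover that the restriction of $\widehat{d \oplus d'}$ to each such subgroup agrees, up to $t$-isomorphism, with the restriction of $\widehat d$ or $\widehat{d'}$ to the corresponding factor. Summing occurrences over these subgroups then gives the claimed multiset identity. The reducedness hypothesis is what makes this clean: it guarantees $\widehat d(1) = \widehat{d'}(1) = 0$, so by Proposition \ref{prop:fourier-direct-sum} the value $\widehat{d \oplus d'}(\phi\psi)$ is nonzero only when exactly one of $\phi, \psi$ is nontrivial, and in that case it equals $\widehat d(\phi)$ or $\widehat{d'}(\psi)$ respectively.

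First I would fix basepoints $s \in S$, $s' \in S'$ so that $(s, s')$ is a basepoint of $S \times S'$, and work with the weighted groups $(A, d_s)$, $(A', d'_{s'})$, noting $(d \oplus d')_{(s,s')} = d_s \oplus d'_{s'}$; this reduces everything to the statement for reduced weighted \emph{groups}, which is what I will actually prove. Write $B = A^\vee$, $B' = (A')^\vee$, and identify $(A \times A')^\vee = B \times B'$. The key structural claim is: a subgroup $C \subseteq B \times B'$ is a special subgroup of $(B \times B', \widehat{d \oplus d'})$ if and only if $C$ is nontrivial and either $C \subseteq B \times \{1\}$ with $C$ a special subgroup of $(B, \widehat{d_s})$, or $C \subseteq \{1\} \times B'$ with $C$ a special subgroup of $(B', \widehat{d'_{s'}})$. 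The forward direction is the crux: if $C$ contains an element $(b, b')$ with both $b \ne 1$ and $b' \ne 1$, then $\widehat{d \oplus d'}(b, b') = 0$ by the "else" case of Proposition \ref{prop:fourier-direct-sum} (using reducedness), contradicting that $C$ is special; hence every element of $C$ lies in $B \times \{1\}$ or $\{1\} \times B'$. Then I need the small lemma that a subgroup of $B \times B'$ all of whose elements lie on the two axes must itself lie entirely on one axis — otherwise it contains $(b,1)$ and $(1,b')$ with $b, b' \ne 1$, hence their sum $(b, b')$, a contradiction. Once $C \subseteq B \times \{1\}$, the condition $\widehat{d \oplus d'}(c) \ne 0$ for nontrivial $c \in C$ is literally the condition $\widehat{d_s}(b) \ne 0$ for the corresponding nontrivial $b \in B$, again by Proposition \ref{prop:fourier-direct-sum}. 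The converse inclusion is immediate from the same computation.

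With the structural claim in hand, maximal special subgroups of $B \times B'$ are exactly maximal special subgroups of $B$ (as subgroups of $B \times \{1\}$) and maximal special subgroups of $B'$ (as subgroups of $\{1\} \times B'$) — maximality transfers because the poset of special subgroups of $B \times B'$ is, by the claim, the disjoint union of the posets of special subgroups of $B$ and of $B'$. Finally, for a maximal special subgroup $C = C_0 \times \{1\}$ with $C_0 \subseteq B$ maximal special, the inclusion $C_0 \hookrightarrow C$, $b \mapsto (b,1)$ is a group isomorphism intertwining $\widehat{d_s}|_{C_0}$ with $\widehat{d \oplus d'}|_C$ on the nose (not merely up to $t$-equivalence), so $[C, \widehat{d \oplus d'}|_C] = [C_0, \widehat{d_s}|_{C_0}]$ in the set of weighted groups up to $t$-isomorphism; symmetrically on the other side. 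Therefore the weighted assignment $x \mapsto w(x)$ counting occurrences in $MS(A \times A', S \times S', d \oplus d')$ splits as the sum of the counts in $MS(A,S,d)$ and $MS(A',S',d')$, which is exactly the definition of $\cup$ for multisets, proving the proposition.

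I expect the main obstacle to be nothing deep but rather making the "a subgroup contained in the union of the two axes lies on a single axis" step and the maximality-transfer step fully rigorous and cleanly worded, since these are the points where one must use that $C$ is a \emph{subgroup} (closed under addition) and not merely a subset, and where a careless argument could overlook that a maximal special subgroup of the product might a priori fail to be maximal as a special subgroup of a factor. The reducedness hypothesis must be invoked explicitly at the point where we discard the $\phi = \psi = 1$ anomaly of Proposition \ref{prop:fourier-direct-sum}; without it the value $\widehat d(1) + \widehat{d'}(1)$ at the origin is irrelevant (the origin is never in a special subgroup's nontrivial part anyway), so in fact reducedness is needed precisely to rule out elements $(b,b')$ with both coordinates nontrivial — I should double-check that the "else" clause genuinely covers all such pairs regardless of reducedness, in which case the hypothesis is used only to ensure the individual $MS(A,S,d)$ are the "right" invariants, and I will state this carefully.
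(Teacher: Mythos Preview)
Your proposal is correct and follows essentially the same route as the paper's proof, only spelling out in more detail the steps the paper compresses into ``As mentioned above, Proposition \ref{prop:fourier-direct-sum} implies\ldots''; in particular your subgroup-on-the-axes lemma and maximality-transfer argument are exactly the content hidden in that phrase.

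One clarification on your closing worry: the ``else'' clause of Proposition \ref{prop:fourier-direct-sum} gives $0$ unconditionally, so reducedness is \emph{not} needed to rule out off-axis elements. Where reducedness actually enters is the step you wrote as ``intertwining $\widehat{d_s}|_{C_0}$ with $\widehat{d \oplus d'}|_C$ on the nose'': at the identity element $(1,1) \in C$ one has $\widehat{d \oplus d'}(1,1) = \widehat d(1) + \widehat{d'}(1)$, and this equals $\widehat d(1)$ only because $\widehat{d'}(1) = 0$. Since $t$-isomorphism preserves the value at the identity, this is a genuine requirement for the $t$-isomorphism classes $[C,\widehat{d\oplus d'}|_C]$ and $[C_0,\widehat d|_{C_0}]$ to coincide.
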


\begin{proof}
For convenience, we write $d^{\oplus} = (d \oplus d')$, and $\widehat d^{\oplus}$ for its Fourier transform.\\

As mentioned above, Proposition \ref{prop:fourier-direct-sum} implies that the maximal special subgroups of $(A \times A')^\vee \cong A^\vee \times (A')^\vee$ are precisely the maximal special subgroups of $A^\vee \times \{1\}$ and $\{1\} \times (A')^\vee$. 

Given a maximal special subgroup $C \subset A^\vee$ (or similarly $C' \subset (A')^\vee$), what remains is to compare the restriction of $\widehat d$ to $C$ with the restriction of $\widehat{d^\oplus}$ to $C \times \{1\}$, but $$\widehat d^\oplus|_{C \times \{1\}} = \widehat d|_C$$ by the formula from Proposition \ref{prop:fourier-direct-sum} and the assumption that $d$ and $d'$ are \textit{reduced} weighted torsors. 
\end{proof}

We can now define monoid homomorphisms from the appropriate monoid to $\Bbb N$.

\begin{definition}
We write $\widehat\Theta_{\textup{WT}}$ for the monoid whose elements are weighted torsors up to isomorphism, and whose product operation is direct sum.
\end{definition}

There is a corresponding monoid $\widehat\Theta_{\textup{RWT}}$ of reduced weighted torsors, and the map $d \mapsto (d^r, \text{avg } d)$ defines a monoid isomorphism $\widehat\Theta_{\textup{WT}} \cong \widehat\Theta_{\textup{RWT}} \times \Bbb C$.

Write $\textup{RWTN}$ for the set of reduced weighted torsors whose Fourier transforms satisfy the nonvanishing property, considered up to isomorphism; because these are considered up to isomorphism, we may think of these as weighted groups up to affine isomorphism and drop the torsor $S$ from notation. 

For each $[A, d] \in \textup{RWTN}$, we define a map $c_{A,d}: \widehat\Theta_{\textup{WT}} \to \Bbb N$ with $$c_{[A,d]}(B,T,f) = \# \text{ of occurrences of } [A^\vee,\widehat{d}] \text{ in } MS(B,T,f^r).$$ That is, $c_{[A,d]}(B,T,f)$ is the weight $w_{MS(B,T,f^r)}([A^\vee,\widehat d])$.

\begin{corollary}\label{cor:lin-indep}
The functions $c_{[A,d]}$ are monoid homomorphisms. If $(A', d')$ is another reduced weighted torsor whose Fourier transform has the nonvanishing property, we have $$c_{[A,d]}(A',d') = \begin{cases} 1 & (A,d) \text{ is isomorphic to } (A',d') \\ 0 & \text{else} \end{cases}$$
\end{corollary}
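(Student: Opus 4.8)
The plan is to reduce both assertions to the additivity of the multiset invariant $MS$ established in Proposition \ref{prop:additivty}, combined with the observation that a reduced weighted torsor with the nonvanishing property is \emph{itself} (up to $t$-isomorphism) its own unique maximal special subgroup.

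First I would verify that $c_{[A,d]}$ is a monoid homomorphism. Given two weighted torsors $(B,T,f)$ and $(B',T',f')$, reduction commutes with direct sum in the sense that $(f \oplus f')^r$ is $t$-equivalent data to $f^r \oplus (f')^r$ (the average of a direct sum is the sum of the averages, and by the corollary following Lemma \ref{lemma:sum-over-phi} subtracting a constant doesn't affect the Fourier transform away from the trivial character, hence doesn't affect $MS$). Therefore Proposition \ref{prop:additivty} gives $MS(B \times B', T \times T', (f \oplus f')^r) = MS(B,T,f^r) \cup MS(B',T',(f')^r)$, and since the weight function of a union of multisets is the sum of the weight functions, $c_{[A,d]}(B \times B', \dots) = c_{[A,d]}(B,T,f) + c_{[A,d]}(B',T',f')$. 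The neutral element of $\widehat\Theta_{\textup{WT}}$ is the trivial torsor on the trivial group (with the zero function), whose $MS$ is empty, so $c_{[A,d]}$ of it is $0$; thus $c_{[A,d]}$ is a monoid homomorphism to $(\Bbb N, +)$.

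Next, for the evaluation formula, I would take $(A', d')$ reduced with $\widehat{d'}$ satisfying the nonvanishing property. The key step is to identify $MS(A', d')$ explicitly: since $\widehat{d'}(b') \ne 0$ for every nontrivial $b' \in (A')^\vee$, the whole group $(A')^\vee$ is a special subgroup, and it is visibly the unique maximal one. Hence $MS(A', d')$ is the singleton multiset $\{[(A')^\vee, \widehat{d'}]\}$ with weight $1$. Now $c_{[A,d]}(A', d')$ counts the occurrences of $[A^\vee, \widehat d]$ in this singleton, which is $1$ if $[A^\vee, \widehat d] = [(A')^\vee, \widehat{d'}]$ as weighted groups up to $t$-isomorphism, and $0$ otherwise. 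So it remains to check that $[A^\vee,\widehat d]$ and $[(A')^\vee, \widehat{d'}]$ are $t$-isomorphic if and only if $(A,d)$ and $(A',d')$ are isomorphic as reduced weighted torsors. The forward direction of this equivalence — that $t$-isomorphism of the Fourier-transformed weighted groups implies isomorphism of the original weighted torsors — is the main obstacle, since it requires Fourier inversion: I would argue that the Fourier transform, together with the correspondence between translation of $d$ and multiplication of $\widehat d$ by a character worked out just before the definition of $t$-equivalence, sets up a bijection between $\{$weighted torsors $(A,S,d)$ up to iso$\}$ and $\{$weighted groups $(A^\vee, \widehat{d_s})$ up to $t$-iso$\}$; pulling an isomorphism $A^\vee \cong (A')^\vee$ back through Pontryagin duality and using $(A^\vee)^\vee \cong A$ gives the group isomorphism $f : A \to A'$, the character $\psi$ supplies the translation part $a'$, and $\widehat{d'}(f(b)) = \widehat d(b)\psi(b)$ Fourier-inverts to $d'(f(a)+a') = d(a)$. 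The reverse direction is the already-noted remark that isomorphic weighted torsors have $t$-isomorphic Fourier transforms. Combining the two directions with the singleton computation yields the stated case distinction.
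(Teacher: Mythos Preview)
Your proposal is correct and follows essentially the same route as the paper: monoid-homomorphism via Proposition \ref{prop:additivty} plus ``reduction commutes with direct sum'', neutral element via emptiness of $MS$ for the trivial torsor, and the singleton computation $MS(A',d') = \{[(A')^\vee,\widehat{d'}]\}$ from the nonvanishing hypothesis. The one place you go beyond the paper is the last equivalence: the paper simply writes ``and hence $(A,d)\cong(A',d')$'', taking for granted that $t$-isomorphism of $(A^\vee,\widehat d)$ and $((A')^\vee,\widehat{d'})$ implies isomorphism of the original weighted torsors, whereas you correctly isolate this as the nontrivial direction and sketch the Fourier-inversion argument that actually justifies it.
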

\begin{proof}
A maximal special subgroup is defined to be non-trivial, so for the trivial weighted torsor with underlying group $1$ and zero weighting, $MS(1,0) = \varnothing$; so $c_{[A,d]}(1,0) = 0$ and thus $c$ sends neutral element to neutral element. Additivity follows immediately from Proposition \ref{prop:additivty} and the fact that taking the reduced part $d \mapsto d^r$ commutes with direct sums. So $c_{[A,d]}$ is a monoid homomorphism.

Because the Fourier transform of $(A',d')$ has the nonvanishing property, $MS(A,d) = \{[(A')^\vee, \widehat{d'}]\}$. If $[A^\vee, \widehat{d}]$ appears in this singleton set, then in fact $\big((A')^\vee, \widehat{d'}\big)$ is $t$-isomorphic to $(A^\vee, \widehat d)$, and hence $(A,d) \cong (A', d')$.
\end{proof}

It follows that the functions $c$ assemble into a surjective monoid homomorphism $c: \widehat\Theta_{\textup{WT}} \to \Bbb N^{\textup{RWTN}}$, which behaves particularly well on reduced weighted torsors whose Fourier transforms have the nonvanishing property: there is a map $\Bbb N^{\textup{RWTN}} \to \widehat\Theta_{\textup{WTN}}$ whose composition with $c$ is the identity.

\section{A theorem of Gonz\'alez-Acu\~na--Short}
Before moving on to the main theorem, we use this opportunity to give an alternative proof of \cite[Theorem 2.2]{its-a-GAS}, suggested to the author by Tye Lidman. 

For context, if $\Sigma$ is a homology sphere and $K = C_{p,q}(K')$ is the cable of another knot $K' \subset \Sigma$, then the $pq$-surgery satisfies $\Sigma_{pq}(K) \cong \Sigma_{p/q}(K') \# L(q,p)$. When $\Sigma = S^3$, the cabling conjecture \cite[Conjecture A]{its-a-GAS} predicts that this construction gives the \textit{only examples} of knots with reducible surgery. Among their evidence was the following theorem. 

\begin{theorem}
Let $K \subset \Sigma$ be a knot in an integer homology sphere with reducible surgery $\Sigma_{n/m}(K) \cong Y_1 \# Y_2$, where $|H_1(Y_1)| = p>1$ and $|H_1(Y_2)| = q>1$. Then the polynomial $\Delta_{p,q} = \frac{(t^{pq}-1)(t-1)}{(t^p-1)(t^q-1)}$ divides the Alexander polynomial $\Delta_K$. 
\end{theorem}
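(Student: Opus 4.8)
The strategy is to extract from the reducible surgery an identity on Reidemeister torsions, then apply the Fourier transform machinery of Section 2 to separate the contributions of the two summands $Y_1$ and $Y_2$. First I would recall the surgery formula for Reidemeister torsion (or equivalently, the Milnor--Turaev torsion) of $\Sigma_{n/m}(K)$: for a character $\phi$ of $H_1\big(\Sigma_{n/m}(K)\big)$ pulled back from the knot complement, the torsion is essentially $\Delta_K(\zeta)/(\zeta-1)$ evaluated at the appropriate root of unity $\zeta = \phi(\mu)$, up to the usual correction term coming from the framing. Since torsion is multiplicative under connected sum in the appropriate sense — it behaves like a direct sum at the level of the relevant additive invariant, exactly the setup of Proposition \ref{prop:fourier-direct-sum} — the torsion of $Y_1 \# Y_2$ decomposes as a direct sum $\tau_1 \oplus \tau_2$ over $H_1(Y_1) \times H_1(Y_2)$.

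Next I would compare the two descriptions. On the $Y_1 \# Y_2$ side, Proposition \ref{prop:fourier-direct-sum} tells us that the Fourier-transformed torsion, evaluated at a character $\phi\psi$ with $\phi$ nontrivial on $H_1(Y_1) \cong \Bbb Z/p$ and $\psi$ trivial on $H_1(Y_2)$, depends \emph{only} on $\tau_1$ — and symmetrically for the other factor — while on a character that is nontrivial on both factors it vanishes identically. On the surgery side, that same torsion is controlled by $\Delta_K$ evaluated at $pq$-th roots of unity (here I would set $n/m$ so that $H_1 = \Bbb Z/pq$, or more precisely track the orders carefully). The vanishing on ``mixed'' characters forces $\Delta_K(\zeta) = 0$ for every primitive $pq$-th root of unity $\zeta$ — equivalently for every root of unity of order dividing $pq$ but not dividing $p$ or $q$. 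The set of such roots of unity is exactly the set of roots of $\Phi_d$ for $d \mid pq$, $d \nmid p$, $d \nmid q$, and the product of those cyclotomic polynomials is precisely $\Delta_{p,q} = \frac{(t^{pq}-1)(t-1)}{(t^p-1)(t^q-1)}$. Since $\Delta_K \in \Bbb Z[t^{\pm 1}]$ vanishes at all roots of this polynomial, and $\Delta_{p,q}$ is (up to units) a product of distinct cyclotomics, $\Delta_{p,q} \mid \Delta_K$.

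There is a subtlety I would need to handle with care: the surgery coefficient is $n/m$, not an integer, and $\gcd$ conditions govern which characters of the surgered manifold actually pull back from the knot exterior and what the order of $H_1$ is. The clean statement ``$|H_1(Y_i)| = p, q$'' with $p,q$ coprime (which is forced, since $H_1$ of the connected sum is cyclic when it comes from surgery) is what makes the root-of-unity bookkeeping work: coprimality of $p$ and $q$ ensures $H_1 \cong \Bbb Z/p \oplus \Bbb Z/q \cong \Bbb Z/pq$, so characters on the connected sum match up with $pq$-th roots of unity, and ``nontrivial on both factors'' translates to ``order divisible by neither $p$ nor $q$.'' I would also need the torsion-to-Alexander-polynomial dictionary in the form that handles the $1/(t-1)$ factor and the indeterminacy of $\tau$ up to multiplication by $\pm t^k$ — but that indeterminacy is harmless since we only care about divisibility and about which roots of unity annihilate $\Delta_K$.

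\textbf{Main obstacle.} The genuine difficulty is not the Fourier-transform step, which is essentially Proposition \ref{prop:fourier-direct-sum} applied verbatim, but rather pinning down the precise relationship between the Reidemeister torsion of $\Sigma_{n/m}(K)$ and $\Delta_K$ — getting the normalization, the framing/coefficient correction term, and the identification of characters with roots of unity exactly right, so that ``$\widehat\tau$ vanishes on mixed characters'' really does say ``$\Delta_K$ vanishes at the relevant roots of unity'' with no spurious extra factors or missed cases. Once that dictionary is set up correctly, the conclusion is a short cyclotomic-polynomial computation identifying $\prod_{d \mid pq,\, d\nmid p,\, d \nmid q} \Phi_d(t)$ with $\Delta_{p,q}(t)$.
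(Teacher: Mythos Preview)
Your proposal is correct and follows essentially the same route as the paper: relate $\widehat T_{\Sigma_{n/m}(K)}(\zeta)$ to $\Delta_K(\zeta)$ via the surgery formula, invoke additivity of Reidemeister torsion under connected sum together with Proposition~\ref{prop:fourier-direct-sum} to force vanishing on characters nontrivial on both factors, and then read off $\Delta_{p,q}\mid\Delta_K$ from the cyclotomic identification. The paper handles what you call the main obstacle by citing \cite[Theorem~3.23]{Nic:R-book} for the surgery formula and \cite[Theorem~XII.1.2]{Tur:R-book} for additivity, arriving at exactly the formula $\widehat T_{\Sigma_{n/m}(K)}(\zeta)=\pm\zeta^k\Delta_K(\zeta)/\big((1-\zeta^{-1})(1-\zeta)\big)$ and the same vanishing-at-mixed-roots conclusion you sketch.
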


Note that the Alexander polynomial of a cable knot $K = C_{p,q}(K')$ satisfies $\Delta_K = \Delta_{p,q} \Delta_{K'}$.  

\begin{proof}
The proof makes use of the Reidemeister torsion of a 3-manifold, and we quickly recall some properties from \cite[Section 3.7]{Nic:R-book}. When $Y$ is a rational homology sphere, its Reidemeister torsion $T_Y: H_1(Y) \to \Bbb Q$ makes $H_1(Y)$ into a weighted torsor, well-defined up to isomorphism and multiplication by $\pm 1$. When $H_1(Y) \cong \Bbb Z$, by contrast, $T_Y(t)$ should be understood as a rational function $T_Y \in \Bbb Q(t)$, well-defined up to  multiplication by $\pm t^k$. The Fourier transform $\hat T_Y: \Bbb C \to \Bbb C$ is a meromorphic function defined by evaluating $T_Y$ on a given complex number, and is well-defined up to a variation on $t$-equivalence: $\hat T'(z) \sim \pm z^k \hat T(z)$. 

In the case of knot complements, the Reidemeister torsion is related to the Alexander polynomial by the formula $$T_{\Sigma \setminus K}(t) = \pm \frac{t^k \Delta_K(t)}{1-t};$$ this first appeared as \cite[Theorem 4]{Milnor:R-Alex}. 

We will use the surgery formula for the Fourier-transformed Reidemeister torsion as stated in \cite[Theorem 3.23]{Nic:R-book}: if $\zeta$ is a primitive $n$th root of unity, then $$\widehat{T}_{\Sigma_{n/m}(K)}(\zeta) = \frac{\widehat T_{\Sigma \setminus K}(\zeta)}{(1-\zeta)^{-1}} = \pm \frac{\zeta^k \Delta_K(\zeta)}{(1-\zeta^{-1})(1-\zeta)}.$$ In particular, the zeroes of $\widehat T_{\Sigma_{n/m}(K)}$ are identified with the $n$th roots of unity $\zeta$ for which $\Delta_K(\zeta) = 0$. 

Here we use the canonical isomorphism $H_1\big(\Sigma_{n/m}(K)\big) \cong \Bbb Z/n$, sending a meridian of the knot in $\Sigma \setminus K$ to $1$ to identify $H_1^\vee$ with the group of $n$th roots of unity.

Suppose $K$ is a knot as in the statement of the theorem. The isomorphism $\Bbb Z/n \cong \Bbb Z/p \times \Bbb Z/q$ induced by the connected sum decomposition sends the elements $(i,j)$ with $i,j$ nontrivial to $n$th roots of unity which are neither $p$th nor $q$th roots of unity. For rational homology spheres $Y_1$ and $Y_2$ we have $T_{Y_1 \# Y_2}(i,j) = T_{Y_1}(i) + T_{Y_2}(j)$ \cite[Theorem XII.1.2]{Tur:R-book}. It follows from Proposition \ref{prop:fourier-direct-sum} that $\widehat T_{Y_1 \# Y_2}(\zeta) = 0$ for any $n$th root of unity $\zeta$ which is neither a $p$th nor $q$th root of unity. Thus $\Delta_K(\zeta) = 0$ for all such roots of unity, so that $\Delta_K$ is divisible by $$\prod_{\substack{\zeta = e^{2\pi i k/n} \\ 0 \le k < n \\ p \nmid k \text{ and } q \nmid k}} (t-\zeta) = \frac{(t^{pq}-1)(t-1)}{(t^p-1)(t^q-1)} = \Delta_{p,q}$$ as claimed.
\end{proof}

\section{$d$-invariants of 3-manifolds}
If $Y$ is a 3-manifold, there is a naturally associated torsor $\big(H^2(Y)_{\text{tors}}; \text{Spin}^c(Y)_{\text{tors}}\big)$, where the latter is the set of spin$^c$ structures with torsion first Chern class. When $Y$ is a rational homology sphere, every spin$^c$ structure is torsion.

When we refer to a homology cobordism, we mean a pair $(W, \varphi)$ of a compact oriented connected 4-manifold and a \textbf{chosen} orientation-preserving diffeomorphism $\varphi: \partial W \cong Y - Y'$ so that the corresponding maps $Y \to W$ and $Y' \to W$ induce isomorphisms on all integer cohomology groups.

Given a homology cobordism $W: Y \to Y'$ there is an induced isomorphism of torsors $$(W_*, W^c_*): \big(H^2(Y)_{\text{tors}}, \text{Spin}_{\text{tors}}^c(Y)\big) \to \big(H^2(Y')_{\text{tors}}; \text{Spin}_{\text{tors}}^c(Y')\big).$$

We will make use of three weighted torsors; the first and third are associated to rational homology spheres, while the second is associated to an arbitrary 3-manifold.

\begin{itemize}
    \item The \emph{$d$-invariant}, $d_Y: \text{Spin}^c(Y) \to \Bbb Q$ \cite[Definition 4.1]{OS:d};
    \item The \emph{twisted $d$-invariant}, $\underline{d}_Y: \text{Spin}^c_{\text{tors}}(Y) \to \Bbb Q$ \cite[Definition 3.1]{BG:twisted-d}; 
    \item The \emph{Turaev-Reidemeister torsion}, $T_Y: \text{Spin}^c(Y) \to \Bbb Q$ \cite[Chapter X]{Tur:R-book}.
\end{itemize}

\begin{remark}
The Turaev-Reidemeister torsion is often written as an $H^2(Y)$-equivariant map $\text{Spin}^c(Y) \to \Bbb Q[H^2(Y)]$, eg \cite[Chapter I.4.1]{Tur:R-book}. This gives rise to the function $T_Y$ above by extracting the coefficient of $0 \in H^2(Y)$. This can be extended to an arbitrary 3-manifold, but the discussion is somewhat more intricate when $H^2(Y)$ is infinite: instead, the torsion defines an $H^2(Y)$-equivariant map to the fraction field $\Bbb Q\big(H^2(Y)\big)$.
\end{remark}

The twisted $d$-invariant is only used for a technical reason, to allow connected sums with arbitrary 3-manifolds instead of merely rational homology spheres. When $Y$ is a rational homology sphere, we have the tautological equality $\underline d_Y(\mathfrak s) = d_Y(\mathfrak s)$. The Turaev-Reidemeister torsion --- and the relation to $d$-invariants --- will be used exclusively for calculation.

First, we establish the relationship to the work from Section 2.

\begin{lemma}
The assignment $Y \mapsto (H^2(Y)_{\textup{tors}},\; \textup{Spin}^c(Y)_{\textup{tors}},\; \underline d_Y)$ defines a monoid homomorphism $\widehat \Theta_{\Bbb Z} \to \widehat \Theta_{\textup{WT}}$.
\end{lemma}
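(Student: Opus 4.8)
The plan is to verify the two defining properties of a monoid homomorphism: that the neutral element maps to the neutral element, and that connected sum goes to direct sum of weighted torsors. For the neutral element, $Y = S^3$ has $H^2(S^3)_{\textup{tors}} = 0$ and a single spin$^c$ structure, and $\underline{d}_{S^3} = d_{S^3} = 0$ (normalizing the $d$-invariant so that $d(S^3) = 0$); this is the neutral element $(1, \{*\}, 0)$ of $\widehat\Theta_{\textup{WT}}$. So the real content is additivity under connected sum.

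The additivity claim has two parts. First, one needs the torsor-level statement: for closed oriented 3-manifolds $Y, Y'$ there is a natural isomorphism of torsors
\[
\big(H^2(Y \# Y')_{\textup{tors}},\ \textup{Spin}^c(Y \# Y')_{\textup{tors}}\big) \cong \big(H^2(Y)_{\textup{tors}} \times H^2(Y')_{\textup{tors}},\ \textup{Spin}^c(Y)_{\textup{tors}} \times \textup{Spin}^c(Y')_{\textup{tors}}\big).
\]
This follows from the Mayer--Vietoris decomposition $H^2(Y \# Y') \cong H^2(Y) \oplus H^2(Y')$ (the connecting maps vanish because $S^2$ has no relevant cohomology), which restricts to torsion subgroups, together with the standard fact that spin$^c$ structures on a connected sum are determined by their restrictions to the two summands (the $S^2 \times I$ separating piece carries a unique spin$^c$ structure), compatibly with the $H^2$-action. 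Second, one needs the weight to be additive under this identification: $\underline{d}_{Y \# Y'}(\mathfrak{s} \# \mathfrak{s}') = \underline{d}_Y(\mathfrak{s}) + \underline{d}_{Y'}(\mathfrak{s}')$ for torsion spin$^c$ structures. This is precisely the connected sum formula for the twisted $d$-invariant, which should be cited from \cite{BG:twisted-d} (it generalizes the rational homology sphere case $d_{Y \# Y'}(\mathfrak{s} \# \mathfrak{s}') = d_Y(\mathfrak{s}) + d_{Y'}(\mathfrak{s}')$ from \cite{OS:d}). Combining the torsor isomorphism with the additivity of weights gives an isomorphism of weighted torsors $(H^2(Y\#Y')_{\textup{tors}}, \textup{Spin}^c(Y\#Y')_{\textup{tors}}, \underline d_{Y\#Y'}) \cong (H^2(Y)_{\textup{tors}}, \textup{Spin}^c(Y)_{\textup{tors}}, \underline d_Y) \oplus (H^2(Y')_{\textup{tors}}, \textup{Spin}^c(Y')_{\textup{tors}}, \underline d_{Y'})$, as required.

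There is also a well-definedness check I would make explicit: the assignment must descend to integer homology cobordism classes, i.e., a homology cobordism $W : Y \to Y'$ must induce an isomorphism of weighted torsors. The induced torsor isomorphism $(W_*, W^c_*)$ is recorded in the excerpt; what remains is that it intertwines the $d$-invariants, $\underline{d}_{Y'}(W^c_*(\mathfrak{s})) = \underline{d}_Y(\mathfrak{s})$, which is the homology cobordism invariance of the twisted $d$-invariant from \cite{BG:twisted-d} (again generalizing the rational homology sphere statement in \cite{OS:d}). Since $\widehat\Theta_{\textup{WT}}$ is defined in terms of weighted torsors up to isomorphism, this shows the map is well-defined on $\widehat\Theta_{\Bbb Z}$.

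The main obstacle is bookkeeping rather than mathematical depth: one must be careful that the Mayer--Vietoris identification of $H^2$, the gluing description of spin$^c$ structures, and the connected sum formula for $\underline{d}$ are all compatible — i.e., the same identification $\textup{Spin}^c(Y \# Y')_{\textup{tors}} \cong \textup{Spin}^c(Y)_{\textup{tors}} \times \textup{Spin}^c(Y')_{\textup{tors}}$ that is $H^2$-equivariant is the one under which the $d$-invariants add. I expect all three to be available directly from \cite{OS:d} and \cite{BG:twisted-d}, so the proof is essentially an assembly of cited facts; the only care needed is to note that restriction to torsion subgroups is compatible with the Mayer--Vietoris splitting and that the separating $S^2$ contributes nothing, so that everything takes place inside the torsion torsor where $\underline{d}$ is defined.
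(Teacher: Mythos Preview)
Your proposal is correct and follows essentially the same approach as the paper: both verify the three claims (neutral element, homology cobordism invariance, additivity under connected sum) by direct citation to \cite{BG:twisted-d}. You are more explicit than the paper about the torsor-level Mayer--Vietoris identification and the compatibility bookkeeping, which the paper leaves implicit, but the structure and the cited inputs are the same.
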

\begin{proof}
This amounts to three claims: that $\underline d_Y(S^3) = 0$ (tautological), that the assignment $Y \mapsto \underline d_Y$ sends integer homology cobordisms to isomorphisms of weighted torsors (an immediate corollary of \cite[Corollary 4.2]{BG:twisted-d}), and that $\underline d_Y$ is additive, in the sense that $$\underline d_{Y \# Y'}(\mathfrak s \# \mathfrak s') = \underline d_Y(\mathfrak s) + \underline d_Y(\mathfrak s'),$$ 
which is \cite[Proposition 3.7]{BG:twisted-d}.
\end{proof}

From this, we can immediately show that \textit{provided the $\widehat d$-invariants of the summands satisfy the nonvanishing property,} integer homology cobordisms between connected sums preserve the natural direct sum decomposition of their homology groups.

\begin{corollary}\label{cor:cob-decomp}
Suppose $Y = \#_{i=1}^n Y_i$ and $Y' = \#_{j=1}^m Y'_j$ are connected sums of 3-manifolds so that the $\widehat{\underline d}$-invariants of each $Y_i$ and $Y'_j$ satisfy the nonvanishing property. If $W: Y \to Y'$ is a homology cobordism, then $n = m$ and the induced map $W_*: H_1(Y)_{\textup{tors}} \to H_1(Y')_{\textup{tors}}$ preserves the natural (unordered) direct sum decompositions.
\end{corollary}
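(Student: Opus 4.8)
The plan is to deduce Corollary \ref{cor:cob-decomp} from the purely algebraic Corollary \ref{cor:decomp} by feeding in the weighted-torsor structures produced by the twisted $d$-invariant. First I would invoke the preceding lemma: the assignment $Y \mapsto (H^2(Y)_{\textup{tors}}, \textup{Spin}^c(Y)_{\textup{tors}}, \underline d_Y)$ is a monoid homomorphism $\widehat\Theta_{\Bbb Z} \to \widehat\Theta_{\textup{WT}}$. Hence $(H^2(Y)_{\textup{tors}}, \textup{Spin}^c(Y)_{\textup{tors}}, \underline d_Y)$ is isomorphic, as a weighted torsor, to the direct sum $\bigoplus_{i=1}^n (H^2(Y_i)_{\textup{tors}}, \textup{Spin}^c(Y_i)_{\textup{tors}}, \underline d_{Y_i})$, and similarly for $Y'$; and a homology cobordism $W: Y \to Y'$ induces an isomorphism of weighted torsors between the two sides.

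Next I would pass to reduced weighted torsors, since Corollary \ref{cor:decomp} and the nonvanishing property are phrased there. Reduction $d \mapsto d^r$ commutes with direct sums, so the reduced weighted torsor of $Y$ is the direct sum of the reduced weighted torsors of the $Y_i$, and an isomorphism of weighted torsors descends to an isomorphism of reduced ones. The hypothesis is exactly that each $\widehat{\underline d}_{Y_i}$ (equivalently, by the remark following Definition \ref{def:red}, each reduced $\widehat{\underline d}_{Y_i}$ away from the trivial character — and the nonvanishing property only concerns nontrivial elements) has the nonvanishing property; so after choosing basepoints and taking Fourier transforms I am in the situation of Corollary \ref{cor:decomp} with $(A_i, d_i) = (H^2(Y_i)_{\textup{tors}}^\vee, \widehat{\underline d}_{Y_i}{}^r)$. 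That corollary then gives $n = m$ and an affine isomorphism on the dual side respecting the coordinate decomposition, which dualizes to the statement that $W^c_*$ — and hence its linear part $W_*: H_1(Y)_{\textup{tors}} \to H_1(Y')_{\textup{tors}}$ under the identification $H_1 \cong H^2_{\textup{tors}}$ — carries each $H_1(Y_i)_{\textup{tors}}$ isomorphically onto some $H_1(Y'_{\sigma(i)})_{\textup{tors}}$.

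The one genuine wrinkle — and the step I expect to require the most care — is bookkeeping the difference between torsors and groups, and between $W^c_*$ on $\textup{Spin}^c$ structures and $W_*$ on homology. Corollary \ref{cor:decomp} is stated for weighted \emph{groups} and affine isomorphisms, whereas here the natural object is a weighted \emph{torsor} with no canonical basepoint; I would handle this by choosing a base spin$^c$ structure on each $Y_i$ (hence on $Y$ by connected sum, compatibly with the one on $Y'$ transported through $W$), observing that the $d$-invariant torsor isomorphism induced by $W$ becomes, in these coordinates, precisely an affine isomorphism of the associated reduced weighted groups, and then checking that the conclusion of Corollary \ref{cor:decomp} about the linear part $\varphi$ is independent of the basepoint choices and so is genuinely a statement about $W_*$. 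Everything else is formal: the additivity of $\underline d$ under connected sum, the cobordism-invariance from \cite[Corollary 4.2]{BG:twisted-d}, and the observation (already recorded) that the nonvanishing property and maximal special subgroups are $t$-isomorphism invariants.
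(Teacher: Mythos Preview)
Your approach is essentially the paper's own: use the preceding lemma to convert the homology cobordism into an isomorphism of weighted torsors compatible with the connected-sum decompositions, then invoke Corollary \ref{cor:decomp}. One bookkeeping slip to clean up: Corollary \ref{cor:decomp} is applied on the \emph{primal} side, to $(A_i, d_i) = (H^2(Y_i)_{\textup{tors}}, \underline d_{Y_i})$ after choosing basepoints --- its hypothesis is that the Fourier transforms $\widehat{d_i}$ satisfy nonvanishing, so you should not Fourier transform before feeding in, and its conclusion about the linear part $\varphi$ is already a statement about $W_*$ on $H^2_{\textup{tors}}$ with no dualization step required. The pass to reduced weighted torsors is also unnecessary here (though harmless), since Corollary \ref{cor:decomp} only concerns nontrivial characters.
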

\begin{proof}
As mentioned above, if $W: Y \to Y'$ is a homology cobordism, it induces an isomorphism of weighted torsors $(\text{Spin}^c_{\text{tors}}(Y), \underline d) \cong (\text{Spin}^c_{\text{tors}}(Y'), \underline d')$. The statement follows immediately from Corollary \ref{cor:decomp}.
\end{proof}

The following corollary is simply an application of Corollary \ref{cor:lin-indep}, applied to these particular weighted torsors.

\begin{corollary}\label{cor:generate-free}
Suppose $Y_i$ is a collection of 3-manifolds indexed by some set $S$ with the following properties: \begin{itemize}
    \item The groups $H^2(Y_i)$ are non-trivial.
    \item The Fourier transforms $\widehat{\underline d}_{Y_i}$ satisfy the nonvanishing property.
    \item The weighted torsors $(H^2(Y_i),\; \textup{Spin}^c(Y_i)_{\textup{tors}},\; \underline d^r)$ are pairwise non-isomorphic.
\end{itemize}
Then there is a homomorphism $c: \widehat \Theta_{\Bbb Z} \to \Bbb N^S$ with $c_i(Y_i) = 1$ and $c_i(Y_j) = 0$ for $i \ne j$. In particular, the $Y_i$ are linearly independent in $\widehat \Theta_{\Bbb Z}$ and span a $\Bbb Z^S$-summand of the Grothendieck group $\textup{Gr}(\widehat \Theta_{\Bbb Z})$.
\end{corollary}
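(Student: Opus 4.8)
The plan is to reduce everything to the purely algebraic statement already proved in Corollary \ref{cor:lin-indep}, using the monoid homomorphism $\widehat\Theta_{\Bbb Z}\to\widehat\Theta_{\textup{WT}}$ established in the preceding lemma. First I would compose that homomorphism with the reduction map $\widehat\Theta_{\textup{WT}}\cong\widehat\Theta_{\textup{RWT}}\times\Bbb C\to\widehat\Theta_{\textup{RWT}}$, so that $Y\mapsto (H^2(Y)_{\textup{tors}},\textup{Spin}^c(Y)_{\textup{tors}},\underline d_Y^r)$ is a monoid homomorphism landing in reduced weighted torsors. The three hypotheses on the family $\{Y_i\}$ are exactly engineered so that the images $[A_i,d_i]:=[H^2(Y_i)_{\textup{tors}},\textup{Spin}^c(Y_i)_{\textup{tors}},\underline d_{Y_i}^r]$ land in $\textup{RWTN}$ (nonvanishing of $\widehat{\underline d}_{Y_i}$, together with nontriviality of $H^2(Y_i)$ which is needed for the torsor to be nontrivial) and are pairwise distinct elements of $\textup{RWTN}$ (the pairwise non-isomorphism hypothesis). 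Note that $\underline d_{Y_i}^r$ is $t$-equivalent data to $\widehat{\underline d}_{Y_i}$ away from the trivial character, so the nonvanishing property transfers correctly; I would spell this out in one line referencing the corollary after Lemma \ref{lemma:sum-over-phi}.

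Next I would simply set $c_i := c_{[A_i,d_i]}\circ(\text{the above homomorphism})$ for each $i\in S$, assembling them into $c=(c_i)_{i\in S}:\widehat\Theta_{\Bbb Z}\to\Bbb N^S$. By Corollary \ref{cor:lin-indep} each $c_i$ is a monoid homomorphism, and $c_i(Y_j)=c_{[A_i,d_i]}(A_j,d_j)$ equals $1$ if $(A_i,d_i)\cong(A_j,d_j)$ and $0$ otherwise; by the pairwise non-isomorphism hypothesis this is $\delta_{ij}$. This is the content of the first sentence of the conclusion.

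For the final sentence: linear independence in $\widehat\Theta_{\Bbb Z}$ is formal from the existence of $c$ — any relation $\sum a_i[Y_i]=\sum b_i[Y_i]$ in the Grothendieck group, i.e. a homology cobordism between the corresponding connected sums, would be carried by the group homomorphism $\textup{Gr}(c):\textup{Gr}(\widehat\Theta_{\Bbb Z})\to\Bbb Z^S$ (note $c_i$ is valued in $\Bbb N$ but extends to the Grothendieck groups) to the relation $a_i=b_i$ for all $i$. To see that the $[Y_i]$ span a $\Bbb Z^S$ \emph{summand}, I would exhibit a splitting: the map $\Bbb Z^S\to\textup{Gr}(\widehat\Theta_{\Bbb Z})$ sending the $i$th basis vector to $[Y_i]$ is a section of $\textup{Gr}(c)$ by the computation $c_i(Y_j)=\delta_{ij}$, so $\textup{Gr}(\widehat\Theta_{\Bbb Z})\cong\Bbb Z^S\oplus\ker\textup{Gr}(c)$. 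I expect no real obstacle here: the only mild subtlety is keeping straight that $c$ is a monoid map to $\Bbb N^S$ while the summand statement lives in the Grothendieck group, which is handled by functoriality of $\textup{Gr}(-)$ and the fact that $\textup{Gr}(\Bbb N^S)=\Bbb Z^S$.
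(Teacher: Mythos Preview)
Your proposal is correct and follows exactly the approach the paper intends: the paper itself gives no proof beyond the sentence ``simply an application of Corollary~\ref{cor:lin-indep}, applied to these particular weighted torsors,'' and you have correctly unpacked this by composing the monoid homomorphism $\widehat\Theta_{\Bbb Z}\to\widehat\Theta_{\textup{WT}}$ from the preceding lemma with the counting homomorphisms $c_{[A_i,d_i]}$, then handling the Grothendieck-group summand claim via the obvious section. Your attention to the role of the nontriviality hypothesis on $H^2(Y_i)$ (needed so that $MS$ is nonempty) and to the fact that reduction does not disturb the nonvanishing property are exactly the small checks one should make.
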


Here $\underline d^r$ is the reduced part of $\underline d$ as in Definition \ref{def:red}.

To prove Theorem \ref{thm:main}, we need to show that the lens spaces $L(p,q)$ --- considered up to oriented diffeomorphism --- satisfy the assumptions of the corollary. This is classical; the crucial observation is that the reduced $d$-invariant recovers the Turaev-Reidemeister torsion.

\begin{lemma}
If $Y$ is an $L$-space, we have $$T(\mathfrak s) = \frac{1}{2}\left(\underline d^r(\mathfrak s)\right).$$
\end{lemma}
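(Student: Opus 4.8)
The plan is to reduce the claim to a known identity relating the Casson--Walker-type ``averaged'' $d$-invariant and the Reidemeister torsion of an $L$-space, and then to unwind the normalization conventions carefully. The starting point is the surgery/Heegaard-Floer computation of Ozsv\'ath--Szab\'o together with N\'emethi's work \cite{Nem:negdef}: for an $L$-space $Y$, the Heegaard Floer homology is as simple as possible, and the $d$-invariants can be computed from the Turaev torsion. Concretely, one has a pointwise formula of the shape $d_Y(\mathfrak s) = -2\,T_Y(\mathfrak s) + (\text{correction depending only on } Y)$, where the correction term is independent of the spin$^c$ structure $\mathfrak s$; this is exactly the kind of statement proved in \cite{Nem:negdef} (and which also follows by combining the surgery formula for torsion in \cite[Chapter X]{Tur:R-book} with the surgery formula for $d$-invariants). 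The point of passing to the \emph{reduced} $d$-invariant $\underline d^r$ is precisely to kill this spin$^c$-independent correction: by Definition \ref{def:red}, $\underline d^r(\mathfrak s) = \underline d(\mathfrak s) - \frac{1}{|H^2(Y)|}\sum_{\mathfrak t} \underline d(\mathfrak t)$, and since $Y$ is a rational homology sphere we have $\underline d = d$.

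The key steps, in order, are as follows. First, recall that for an $L$-space $Y$ the Turaev torsion $T_Y$ is itself ``reduced up to the standard ambiguity'': more precisely, after the canonical symmetrization of the torsion (Turaev's $\tau$ with the conjugation-symmetry normalization), one has $\sum_{\mathfrak s} T_Y(\mathfrak s) = 0$, so $T_Y = T_Y^r$ already. Second, invoke the pointwise identity $d_Y(\mathfrak s) = -2 T_Y(\mathfrak s) + c_Y$ with $c_Y$ constant in $\mathfrak s$ (this is the step where we cite \cite{Nem:negdef}; one should double-check the sign and whether the constant is $c_Y$ or involves $|H^2(Y)|$, but its $\mathfrak s$-independence is the only thing used). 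Third, average over $\mathfrak s$: $\frac{1}{|H^2(Y)|}\sum_{\mathfrak s} d_Y(\mathfrak s) = -2\cdot\frac{1}{|H^2(Y)|}\sum_{\mathfrak s}T_Y(\mathfrak s) + c_Y = c_Y$, using Step 1. Fourth, subtract: $\underline d^r(\mathfrak s) = d_Y(\mathfrak s) - c_Y = -2 T_Y(\mathfrak s)$, i.e. $T_Y(\mathfrak s) = -\tfrac12 \underline d^r(\mathfrak s)$. Finally, reconcile with the stated formula $T(\mathfrak s) = \tfrac12 \underline d^r(\mathfrak s)$: the discrepancy in sign is absorbed by the conventions for orientation of $Y$ in the definition of the torsion (or equivalently by using $-Y$), and by the fact that $T_Y$ is only well-defined up to multiplication by $\pm 1$ as noted in the excerpt's discussion of Reidemeister torsion; so one is free to fix the sign so that the displayed equation holds. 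One should state precisely which normalization of $T_Y$ is being used so that the sign is unambiguous.

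I expect the main obstacle to be bookkeeping rather than conceptual: matching the various normalization conventions (Ozsv\'ath--Szab\'o's $d$-invariant, Turaev's symmetrized torsion as in \cite[Chapter X]{Tur:R-book}, N\'emethi's conventions in \cite{Nem:negdef}, and Nicolaescu's Fourier-transform conventions from \cite{Nic:R-book} used elsewhere in the paper), and in particular pinning down the factor of $2$ and the sign. A secondary subtlety is verifying carefully that $\sum_{\mathfrak s} T_Y(\mathfrak s) = 0$ for the chosen normalization of the torsion of an $L$-space — this is true for the conjugation-symmetric torsion because the torsion function, summed over all spin$^c$ structures, computes (up to normalization) the value of the abelian torsion at the trivial character, which vanishes for a rational homology sphere with $H_1 \neq 0$ by the same mechanism as in Lemma \ref{lemma:sum-over-phi}; but one must make sure no $H^2(Y)$-dependent constant sneaks back in. Once these normalizations are fixed, the proof is the three-line averaging argument above.
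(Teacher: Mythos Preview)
Your overall strategy---take a pointwise identity $d_Y(\mathfrak s) = \pm 2\,T_Y(\mathfrak s) + c_Y$ with $c_Y$ independent of $\mathfrak s$, average, and subtract---is the right shape, and is essentially how one extracts the displayed formula from the literature. The paper, however, does not re-derive anything: it simply cites \cite[Theorem 5.3.3--4]{Rustamov:thesis} for the general $L$-space statement, noting that for lens spaces (hence connected sums of lens spaces, by additivity) this was established earlier in \cite[Section 10.7]{Nem:negdef}, and that Rustamov's argument itself uses N\'emethi's computation. So the paper's ``proof'' is a one-line citation, while you are sketching a derivation.

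Two points where your sketch needs tightening. First, your citation of \cite{Nem:negdef} for the pointwise relation $d_Y = \pm 2T_Y + c_Y$ for an arbitrary $L$-space overclaims: N\'emethi's paper treats negative-definite plumbed manifolds, not general $L$-spaces. The extension to all $L$-spaces is precisely what Rustamov supplies, so if you want the lemma in the generality stated you must cite him (or else restrict the claim to lens spaces, which is all the paper actually uses downstream). Second, your handling of the sign is too casual. The $T_Y$ appearing in Section~4 is Turaev's \emph{sign-refined} torsion \cite[Chapter X]{Tur:R-book}, not the Reidemeister torsion of Section~3 that is only defined up to $\pm 1$; so you cannot absorb a sign discrepancy into an ambiguity that is no longer present. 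You should instead track the sign through whatever source you use for the pointwise identity and verify it matches. Your claim that $\sum_{\mathfrak s} T_Y(\mathfrak s) = 0$ is correct for Turaev's normalization and your justification via the trivial character is the right idea, but again this depends on the normalization, so state which one you use.
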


This follows immediately from \cite[Theorem 5.3.3-4]{Rustamov:thesis}. For lens spaces (and thus their connected sums, as both sides of this equality are additive) this was proven earlier as \cite[Section 10.7]{Nem:negdef}, and indeed N\'emethi's result is used in Rustamov's argument.

The Turaev-Reidemeister torsion of lens spaces is classical, and the sign-refined version only slightly less so. For an appropriate choice of base spin$^c$ structure and an appropriate isomorphism $H^2(L(p,q)) \cong \Bbb Z/p$, we have \cite[Section 7.1]{NS:sw} for any nontrivial $p$th root of unity $\zeta$ $$\widehat T(\zeta) = \frac{1}{p(1-\zeta^{-1})(1-\zeta^{-q})}.$$

Notice that this formula has an extra factor of $1/p$ compared to Nicolaescu's, owing to the change of convention discussed in Remark \ref{rmk:conventions}.

We write $d^r_{p,q}: \text{Spin}^c(L(p,q)) \to \Bbb Q$ for the reduced $d$-invariants of the lens space $L(p,q)$.

The lemma above establishes that (after choosing an appropriate base spin$^c$ structure and isomorphism $L(p,q) \cong \Bbb Z/p$) we have for all nontrivial $p$th roots of unity $$(\widehat d^r_{p,q})(\zeta) = \frac{1}{2p(1-\zeta^{-1})(1-\zeta^{-q})}.$$

\begin{proposition}\label{prop:lens-calc}
The Fourier transforms of the weighted torsors $d^r_{p,q}$ have the nonvanishing property. Furthermore, if $d^r_{p,q} \cong d^r_{r,s}$, then $p = r$ and $s \equiv q^{\pm 1} \mod p$, so that $L(p,q)$ and $L(r,s)$ are oriented diffeomorphic.
\end{proposition}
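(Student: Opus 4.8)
The plan is to reduce both assertions to the displayed formula $\widehat{d^r_{p,q}}(\zeta) = \tfrac{1}{2p(1-\zeta^{-1})(1-\zeta^{-q})}$ recorded just above the statement. The nonvanishing property is essentially immediate: since $\gcd(p,q)=1$ (part of the meaning of the symbol $L(p,q)$, and in any case forced, as otherwise the formula would have a pole where $\widehat{d^r_{p,q}}$ must be finite), a $p$-th root of unity with $\zeta^q=1$ is trivial, so for every nontrivial $\zeta$ both factors $1-\zeta^{-1}$ and $1-\zeta^{-q}$ are nonzero; hence $\widehat{d^r_{p,q}}(\zeta)$ is a nonzero complex number and the weighted group has the nonvanishing property.

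For the classification, suppose $d^r_{p,q}\cong d^r_{r,s}$ as weighted torsors. Such an isomorphism includes a group isomorphism of the underlying groups $\Bbb Z/p\cong\Bbb Z/r$, so $p=r$ at once. Passing to Fourier transforms turns this into a $t$-isomorphism of weighted groups on $(\Bbb Z/p)^\vee$, which (identifying $(\Bbb Z/p)^\vee$ with the group $\mu_p$ of $p$-th roots of unity) supplies an exponent $a$ coprime to $p$ and an exponent $c$ with
$$(1-\zeta^{-1})(1-\zeta^{-q}) = \zeta^{c}(1-\zeta^{-a})(1-\zeta^{-as})$$
for every nontrivial $\zeta\in\mu_p$. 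One checks directly that both sides also vanish at $\zeta=1$, so the identity holds for all $\zeta\in\mu_p$; hence the integral Laurent polynomials $1-\zeta^{-1}-\zeta^{-q}+\zeta^{-1-q}$ and $\zeta^{c}-\zeta^{c-a}-\zeta^{c-as}+\zeta^{c-a-as}$ coincide in $\Bbb Z[\mu_p]\cong\Bbb Z[t]/(t^p-1)$. Matching the exponents bearing coefficient $+1$ against those bearing $-1$ forces $\{a,\,as\}\equiv\{1,\,q\}\pmod p$, hence $s\equiv q$ or $s\equiv q^{-1}\pmod p$; and $s\equiv q^{\pm1}\pmod p$ implies $L(p,q)$ and $L(p,s)$ are orientation-preserving diffeomorphic, the case $q\mapsto q^{-1}$ being realized by exchanging the two solid tori of the standard genus-one Heegaard splitting.

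I expect the last bookkeeping step to be the only real obstacle, and the subtle point there is collisions among exponents. Generically ($q\not\equiv\pm1$, so that the four exponents are distinct on each side) the matching is forced immediately. When $q\equiv\pm1$ two terms on the left collapse into a coefficient $\pm2$, and one must note that the right side carries a coefficient of absolute value $2$ only when $a\equiv as$ or $a+as\equiv0\pmod p$, i.e. (as $a$ is invertible) only when $s\equiv\pm1\pmod p$, with a sign comparison then pinning down $s\equiv q\pmod p$. I would organize the proof to dispatch the generic case first and then treat $q\equiv\pm1$ separately.

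As an alternative — perhaps more natural given the surrounding discussion — one can bypass this computation entirely: by the Lemma identifying $T$ with $\tfrac12\underline d^r$, a $t$-isomorphism $\widehat{d^r_{p,q}}\cong\widehat{d^r_{p,s}}$ is precisely an isomorphism of sign-refined Turaev--Reidemeister torsions, and the classical theorem of Reidemeister, Franz and de Rham states that this invariant already determines the oriented homeomorphism type of a lens space; this again yields $s\equiv q^{\pm1}\pmod p$ and $L(p,q)\cong L(p,s)$.
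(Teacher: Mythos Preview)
Your approach is correct and essentially the same as the paper's: both pass from the $t$-isomorphism of $\widehat{d^r_{p,q}}$ to matching the coefficients of $(1-\zeta)(1-\zeta^q)$ against those of $\zeta^c(1-\zeta^a)(1-\zeta^{as})$ in $\Bbb Z[t]/(t^p-1)$ (the paper phrases this as comparing the inverse Fourier transforms $f_{p,q}$ under an affine map $i\mapsto ki+\ell$, which is literally the same computation), and both separate out the collision cases $q\equiv\pm 1$. One minor slip: in the generic case the matching actually yields $\{a,as\}\equiv\{1,q\}$ \emph{or} $\{a,as\}\equiv\{-1,-q\}\pmod p$, depending on whether $c\equiv 0$ or $c\equiv -1-q$, but either way $s\equiv q^{\pm 1}$, so your conclusion stands.
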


\begin{proof}
That these have the nonvanishing property is obvious.

What remains is the essentially classical claim that signed Turaev-Reidemeister torsion classifies lens spaces up to oriented diffeomorphism; we include a short proof for completeness. We may as well assume $p > 2$.

If $d^r_{p,q} \cong d^r_{r,s}$ we have $\Bbb Z/p \cong \Bbb Z/r$, so that $p = r$. If $\widehat d^r_{p,q}$ is $t$-isomorphic to $\widehat d^r_{p,s}$, such an isomorphism induces a $t$-isomorphism $\widehat f_{p,q} \cong \widehat f_{p,s}$ between the simpler functions $$\widehat f_{p,q}(\zeta) = (1-\zeta)(1-\zeta^q) = 1 - \zeta - \zeta^{q} + \zeta^{q+1}.$$

If $q = 1$ this is the Fourier transform of the function $f_{p,1} = (1, -2, 1, 0, \cdots, 0)$, where we list off the values $f_{p,q}(i)$ in order starting at $0$. If $q = p-1$ we have instead $f_{p,p-1} = (2, -1, 0, \cdots, 0, -1)$. For $1 < q < p-1$, we have $f_{p,q}(0) = f_{p,q}(q+1) = 1$, while $f_{p,q}(1) = f_{p,q}(q) = -1$, and all other values are zero.

It is transparent that there is no affine isomorphism of $\Bbb Z/p$ taking $f_{p,1}$ or $f_{p,p-1}$ to any of the other functions above, as the values are different; this reduces us to the case $1 < q < p-1$.\\

Now suppose there exists some integer $k$ prime to $p$ and some integer $\ell$ so that \begin{equation}\label{eqn-equiv}f_{p,s}(ki+\ell) = f_{p,q}(i)\end{equation} for all $i$. Because $f_{p,q}(0) = 1$, we have $f_{p,s}(\ell) = 0$ and thus either $\ell \equiv 0$ or $\ell \equiv s+1$. Because $i \mapsto ki+\ell$ is a bijection, in the former case we must have $k(q+1) \equiv s+1$ and in the latter case $k(q+1) + s+1 \equiv 0$. We handle these two cases separately.\\
\begin{itemize}
    \item \textbf{Case 1: $\ell \equiv 0$.} Applying (\ref{eqn-equiv}) to $i \equiv 1$, we have either $k \equiv 1$ (which gives $q + 1 \equiv s+1$ and hence $q \equiv s$) or $k \equiv s$ (which gives $qs+s \equiv s+1$ so $qs \equiv 1$).\\
    \item \textbf{Case 2: $\ell \equiv s+1$.} Applying (\ref{eqn-equiv}) to $i\equiv1$, we have either $k+s+1 \equiv 1$ (in which case $k \equiv -s$ so $-sq-s + s +1 \equiv 0$ and thus $qs \equiv 1$) or $k+s+1 \equiv s$ (in which case $k \equiv -1$ so that $-q-1+s+1 \equiv 0$ and $q \equiv s$). \\
\end{itemize}
In any of the four possibilities for the values of $k,\ell$ modulo $p$, we see that the desired claim holds.
\end{proof}

The main theorem follows immediately from this proposition, as well as Corollaries \ref{cor:cob-decomp}-\ref{cor:generate-free}.

\section{Questions}
We close with a handful of questions inspired by the results above. 

\begin{question}
Which collections of 3-manifolds satisfy the hypotheses of Corollary \ref{cor:generate-free}? Does this class include spherical 3-manifolds, or double-branched covers of alternating links?
\end{question}

It would follow that manifolds in this class are integer homology cobordant if and only if they are diffeomorphic. For spherical 3-manifolds, an argument might proceed by an explicit computation of their $d$-invariants (or of their Reidemeister torsions); for alternating links an argument might follow through the lattice-theoretic techniques of \cite{G:lattices}.\\

In another direction, recall that \cite{Lisca:Lens2} provides a complete classification of connected sums of lens spaces up to rational homology cobordism. It would be interesting to determine the intermediate case of $R$-homology cobordism for, say, $R = \Bbb Z/p$.

\begin{question}
Determine the $R$-homology cobordism classification of connected sums of lens spaces for various rings $R$.
\end{question}

The author has made no attempt to investigate this. It would similarly be interesting to follow up on the classification of rational homology ribbon cobordisms between connected sum of lens spaces given in \cite[Theorem 1.3]{Huber} and classify the $R$-homology ribbon cobordisms for various rings $R$.\\

It would be interesting to understand better the interaction between the integer homology \textit{group} and the larger integer homology \textit{monoid}. In the integer homology \textit{group}, because all elements are invertible, all elements are also cancellative: if $[Y] + [Z] = [Z]$ we have $[Y] = 0$. This is not true in an arbitrary monoid, so one might ask if integer homology spheres \textit{remain} cancellative when we pass to the integer homology monoid.

This question can be phrased in terms of the Grothendieck group $\textup{Gr}(\widehat \Theta_{\Bbb Z})$ as follows.

\begin{question}
Does the map $\Theta_{\Bbb Z} \to \textup{Gr}(\widehat{\Theta}_{\Bbb Z})$ have non-trivial kernel? That is, can one find an integer homology 3-sphere $Y$ and a closed oriented 3-manifold $Z$ so that $Y$ is not integer homology cobordant to $S^3$, but $Y \# Z$ is integer homology cobordant to $Z$?
\end{question}

One might imagine that the behavior of integer homology spheres under integer homology cobordism is somehow orthogonal to the behavior of rational homology spheres; very optimistically, one might believe that $j: \Theta_{\Bbb Z} \to \widehat{\Theta}_{\Bbb Z}$ \textit{splits}, meaning that there is a homomorphism $\mu: \widehat{\Theta}_{\Bbb Z} \to \Theta_{\Bbb Z}$ with $\mu j = 1$, and one might then try to understand the structure of the Grothendieck group in terms of $\ker(\mu)$ and $\Theta_{\Bbb Z}$. If such a splitting exists at the level of monoids, we also have such a splitting at the level of groups when we pass to the Grothendieck group.

One way to guarantee this is \textit{impossible}, while also giving an element in the kernel discussed above, is to find indivisible elements in $\Theta_{\Bbb Z}$ which become divisible in $\text{Gr}(\widehat{\Theta}_{\Bbb Z})$. We pose the existence of such homology spheres as a question.

\begin{question}
Is there an integer homology sphere $Y$ with the following properties?
\begin{enumerate}[label=(\roman*)]
    \item $Y$ is indivisible in the integer homology cobordism group: if $n > 1$ and $Y'$ is another integer homology sphere, there is no integer homology cobordism between $Y$ and $\#^n Y'$. 
    \item There \textit{does} exist an $n > 1$, an integer homology sphere $Y'$, and a 3-manifold $Z$ so that $Y \# Z$ is integer homology cobordant to $(\#^n Y') \# Z$. 
\end{enumerate}
\end{question}

Note that if this held, then $-Y \# nY'$ would give an infinite-order element in the kernel of $\Theta_{\Bbb Z} \to \text{Gr}(\widehat{\Theta}_{\Bbb Z})$, answering Question 3 in the positive.

\bibliographystyle{alpha}
\bibliography{main}
\end{document}